\newtheorem{theorem} {{\textsf{Theorem}}}
\newtheorem{proposition}[theorem]{{\textsf{Proposition}}}
\newtheorem{corollary}[theorem]{{\textsf{Corollary}}}
\newtheorem{definition}[theorem]{{\textsf{Definition}}}
\newtheorem{remark}[theorem]{{\textsf{Remark}}}
\newtheorem{lemma}[theorem]{{\textsf{Lemma}}}
\newtheorem{speculation}[theorem]{{\textsf{Speculation}}}
\newcommand{\Sp}{\mathbb{S}}
\begin{document}
\title{Minimal Simplicial Degree $d$ Maps from Genus $g$ Surfaces to the Torus}
\author{Biplab Basak$^1$ and Ayushi Trivedi}
\date{}
\maketitle
\vspace{-10mm}
\begin{center}

\noindent {\small Department of Mathematics, IIT Delhi, Hauz Khas, New Delhi 110016.}

\noindent {\small {\em E-mail addresses:} \url{biplab@iitd.ac.in}, 
\url{Ayushi.Trivedi@maths.iitd.ac.in}}

\medskip

\date{\today}
\end{center}
\footnotetext[1]{Corresponding author}
\hrule

\begin{abstract}
The degree of a map between orientable manifolds is a fundamental concept in topology, offering deep insights into the structure of the manifolds and the nature of the corresponding maps. This concept has been extensively studied, particularly in the context of simplicial maps between orientable triangulable spaces. In 1982, Gromov proved that if degree $d$ maps exist from a genus $g$ orientable surface to a genus $h$ orientable surface for every $d \in \mathbb{Z}$, then $h$ must be 0 or 1.

Recently, degree $d$ self-maps on spheres, particularly on genus 0 surfaces, have been investigated. In this paper, we focus on the unique minimal 7-vertex triangulation of the torus. We construct simplicial degree $d$ maps from a triangulation of a genus $g$ surface to the 7-vertex triangulation of the torus for $g \geq 1$. Our construction of degree $d$ maps is minimal for every $d$ when $g = 1,2$. If $g \geq 3$, then our construction remains minimal for $|d| \geq 2g - 1$.

We believe that this concept will be highly useful in combinatorial topology, as it leads to several intriguing open research problems. In the final section, we propose some of these open research problems. 
\end{abstract}

\noindent {\small {\em MSC 2020\,:}  Primary 05E45; Secondary 05B45, 05C10, 55M25, 52B70, 57Q15.
	
\noindent {\em Keywords:} Simplicial map, Degree of simplicial maps, Triangulations of surfaces}

\section{Introduction}
The relationship between topology and combinatorics has been a fruitful area of study, especially in the context of manifold triangulations. These triangulations offer a combinatorial approach to analyzing topological spaces by breaking them down into simplices. This decomposition not only aids in theoretical exploration but also enhances computational methods, establishing triangulations as a crucial tool in combinatorial topology.

A key concept in topology is the degree of a map \( f: X \to Y \) between closed orientable \( n \)-manifolds, serving as an essential invariant in topology, geometry, and mathematical physics. Originally introduced by Gauss in the context of algebra, the degree was first defined for smooth maps and later extended to continuous maps via homotopy theory (see Milnor \cite{Milnor}). The foundational works of Hopf \cite{Hopf1928, Hopf1930}, Olum \cite{Olum1953}, and Epstein \cite{Epstein1966} further developed the theory. For dimension two, important results appear in \cite{Kneser, Skora}, while \cite{BrownSchirmer} explores the connection between Nielsen root theory and degree theory, providing new insights.

A key result in this area, established in \cite{Ryabichev2024}, states that for given closed (possibly nonorientable) surfaces \( M \) and \( N \), if a map \( f : M \to N \) has geometric degree \( d > 0 \), then \( \chi(M) \leq d \cdot \chi(N) \). In homology theory, the degree of a map \( f: X \to Y \) is directly computable since \( H_n(X, \mathbb{Z}) = H_n(Y, \mathbb{Z}) = \mathbb{Z} \). The induced homomorphism \( f_*: H_n(X, \mathbb{Z}) \to H_n(Y, \mathbb{Z}) \) satisfies \( f_*([a]) = d \cdot [b] \), where \( [a] \) and \( [b] \) are generators of the \( n \)th homology groups of \( X \) and \( Y \), respectively, and \( d \) is the degree of \( f \). Triangulations play a crucial role in computing homology and degree maps. In \cite{Fan1967}, a combinatorial theorem on simplicial maps from an orientable \( n \)-pseudomanifold to an \( m \)-sphere with the octahedral triangulation was established. Results concerning maps between \( n \)-spheres can be found in \cite{Whitney}.  In \cite{Madahar2001}, a minimal triangulation \( K \) was constructed such that a simplicial degree \( d \) self-map \( f: K \to \mathbb{S}^2_4 \) exists. More recently, in \cite{BGT25}, similar constructions were extended to higher dimensions, providing minimal triangulations that admit simplicial degree \( d \) self-maps \( f: K \to \mathbb{S}^n_{n+2} \).

Building upon these results, Gromov \cite{Gromov1982} proved that if degree \( d \) maps exist from a genus \( g \) orientable surface to a genus \( h \) orientable surface for every \( d \in \mathbb{Z} \), then \( h \) must be either 0 or 1. Recent studies have further examined degree \( d \) self-maps on spheres, particularly in the case of genus zero surfaces. In this paper, we focus on the unique minimal 7-vertex triangulation of the torus and construct simplicial degree \( d \) maps from triangulations of genus \( g \) surfaces to this 7-vertex torus triangulation for \( g \geq 1 \). Our construction yields minimal degree \( d \) maps for all \( d \) when \( g = 1,2 \) and remains minimal for \( |d| \geq 2g - 1 \) when \( g \geq 3 \).

This study contributes to combinatorial topology by offering new insights and generating several intriguing open problems. In the final section, we outline these problems, hoping they will inspire further research in this domain.

\section{Preliminaries}
\subsection{Basic Terminologies}
    A {\em simplicial complex} $K$ is a finite collection of simplices in $\mathbb{R}^m$ for some $m \in \mathbb{N}$ such that for any simplex $\sigma$, all of its faces are in $K$, and for any two simplices $\sigma$ and $\tau$ in $K$, either $\sigma \cap \tau$ is empty or a face of both simplices. We assume that the empty set is the face of every simplicial complex. The geometric carrier of $K$ is a compact polyhedron $\|K\|$ and is defined as $\|K\|:= \bigcup_{\sigma \in K} \sigma$.

 A triangulation of a polyhedron $X$ is a simplicial complex $K$ with a PL-homeomorphism between $X$ and $\|K\|$. If $M$ is a topological $n$-manifold and $\|K\|$ is homeomorphic to $M$ for an $n$-simplicial complex $K$, then we say that $K$ is a triangulation of $M$ or a triangulated $n$-manifold. Let $K$ be a triangulated $n$-manifold with a connected boundary. Then $\partial(K)$ is the boundary of $K$ and is a triangulated $(n-1)$-manifold. Furthermore, $\partial(K)$ is the collection of $(n-1)$-faces that are contained in exactly one $n$-face.
 If $K$ and $L$ are triangulable $n$ manifolds, then their connected sum, denoted by $K\#L$, is obtained by removing an interior of $n$-simplex from each of $K$ and  $L$, and then gluing the resulting boundaries of these simplices together, the resulting space $K\#L$ is a triangulable $n$-manifold.
 A simplicial complex $X$ is called vertex minimal if $f_0(X) \leq f_0(Y)$ for all triangulations $Y$ of the topological space $\|X\|$. An $n$-dimensional simplicial complex $X$ is called facet minimal if $f_n(X) \leq f_n(Y)$ for all triangulations $Y$ of the topological space $\|X\|$. By $Card(S)$, we mean the cardinality of a set $S$.

    \subsection{Degree of a simplicial map}

Let $M$ and $N$ be closed, orientable, triangulated $n$-manifolds, and let $f: M \rightarrow N$ be a simplicial map. Since the $n$-th homology groups $H_n(M, \mathbb{Z}) $ and $H_n(N, \mathbb{Z}) $ are both isomorphic to $ \mathbb{Z}$, and the simplicial map $f$ induces a homomorphism $f_*: H_n(M, \mathbb{Z}) \to H_n(N, \mathbb{Z}) $. Let $ f_*([a]) = d \cdot [b] $, where $ H_n(M, \mathbb{Z}) = \langle [a] \rangle $ and $ H_n(N, \mathbb{Z}) = \langle [b] \rangle $, and $ d \in \mathbb{Z} $. The integer $d$ is called the degree of the map $f$.

A simplicial map between two simplicial complexes $K$ and $L$ is a map $f: K \rightarrow L$ such that if $[v_1 v_2 \cdots v_{m+1}]$ is a simplex in $K$, then $[f(v_1) f(v_2) \cdots f(v_{m+1})]$ is a simplex in $L$. Suppose an $n$-simplex $\sigma = [v_1 v_2 \cdots v_{n+1}]$ is positively oriented with respect to the ordering $v_1 < v_2 < \cdots < v_{n+1}$, then the simplices formed using even permutations of the arrangement $(v_1, v_2, \ldots, v_{n+1})$ are positively oriented, and the simplices formed using odd permutations of the arrangement $(v_1, v_2, \ldots, v_{n+1})$ are negatively oriented.
\smallskip

 An $n$-manifold $K$ is coherently oriented if all of its $n$-simplices are oriented in the following way:
If $\sigma_1 = [v_1 v_2 \cdots v_{i-1} v_i v_{i+1} \cdots v_{n+1}]$ and $\sigma_2 = [v_1 v_2 \cdots v_{i-1} v'_i v_{i+1} \cdots v_{n+1}]$ are two $n$-simplices sharing an $(n-1)$-simplex $[v_1 v_2 \cdots v_{i-1} v_{i+1} \cdots v_{n+1}]$, then $\sigma_1$ and $\sigma_2$ are oppositely oriented with respect to the orderings $v_1 < v_2 < \cdots < v_i < \cdots < v_{n+1}$ and $v_1 < v_2 < \cdots < v'_i < \cdots < v_{n+1}$, respectively. An $n$-manifold is said to be orientable if it admits coherent orientation. The symbol $\Sigma_g$ represents the orientable surface with genus $g$.   
\smallskip

\begin{remark}\label{orientation}
    {\rm Consider the set of vertices $\{v_1, v_2,\ldots,v_{7}\}$ representing a minimal vertex triangulation of the torus, denoted as $\mathbb{T}{^2}$. The triangulation consists of the following 14 triangles: $[v_1v_2v_4]$, $[v_2v_4v_5]$, $[v_2v_3v_5]$, $[v_3v_5v_6]$, $[v_1v_5v_6]$, $[v_1v_2v_6]$, $[v_2v_6v_7]$, $[v_2v_3v_7]$, $[v_1v_3v_7]$, $[v_1v_5v_7]$, $[v_4v_5v_7]$, $[v_4v_6v_7]$, $[v_3v_4v_6]$, $[v_1v_3v_4]$. This configuration provides a minimal triangulation of the torus $\mathbb{T}^{2}$, which can be arranged within a triangulated $4$-gon, see Figure \ref{fig 1}.

    Consider $\sigma_1=[v_1v_2v_4]$ as a $2$-simplex in $\Sigma_1$, positively oriented with respect to the order $v_1<v_2<v_4$. The simplex $\sigma_2 =[v_{1}v_3v_4]$ will then be negatively oriented with respect to the order $v_1<v_3<v_4$, since both simplices share the edge $[v_1v_4]$ with the same ordering. Similarly, for any $2$-simplex $\tau$ in a minimal vertex triangulation of $\Sigma_1$, its orientation can be determined relative to the order $v_1 < v_2 < \cdots < v_{7}$. We classify positively oriented simplices in a triangulated orientable surface as positive simplices and negatively oriented ones as negative simplices. Extending this idea to a  triangulated $\Sigma_g$, if we assign an orientation to a single 2-simplex as either positive or negative, we can extend this orientation to all other simplices of $\Sigma_g$ accordingly. Let $K$ be a triangulated orientable surface with genus $g$, and let $f: K \rightarrow \mathbb{T}{^2}$ be a simplicial map. If $f$ is not surjective, then the degree of the map $f$ is zero. Furthermore, if $f$ maps a $2$-simplex to a $1$-simplex or a $0$-simplex, then $f(\sigma)$ does not contribute to the second homology group of $\mathbb{T}{^2}$.}

    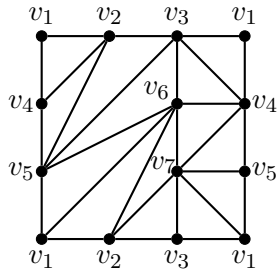
\begin{figure}[H]
\tikzstyle{ver}=[]
\tikzstyle{vertex}=[circle, draw, fill=black!100, inner sep=0pt, minimum width=4pt]
\tikzstyle{edge} = [draw,thick,-]
\centering

\begin{tikzpicture}[scale=0.9]
\begin{scope}[shift={(0,0)}]
\foreach \x/\y/\z in {0/0/v_1,1/0/v_2,2/0/v_3,3/0/v_12,0/1/v_5,0/2/v_4,0/3/v_11,3/1/v_14,3/2/v_13,3/3/v_8,2/3/v_9,1/3/v_10,2/2/v_6,2/1/v_7}{
\node[vertex] (\z) at (\x,\y){};}

\foreach \x/\y/\z in {0/-0.3/v_1,1/-0.3/v_2,2/-0.3/v_3,3/-0.3/v_1, 0/3.3/v_1,1/3.3/v_2,2/3.3/v_3,3/3.3/v_1, -0.3/1/v_5,-0.3/2/v_4,1.8/1.1/v_7,1.7/2.2/v_6,3.3/1/v_5,3.3/2/v_4}{\node[ver] () at (\x,\y){$\z$};}

\foreach \x/\y in {v_1/v_2,v_2/v_3,v_3/v_12,v_12/v_14,v_14/v_13,v_13/v_8,v_8/v_9,v_9/v_10,v_10/v_11,v_11/v_4,v_4/v_5,v_5/v_1,v_3/v_7,v_7/v_6,v_6/v_9,v_10/v_4,v_10/v_5,v_9/v_5,v_9/v_13,v_6/v_5,v_6/v_1,v_6/v_2,v_6/v_13,v_7/v_13,v_7/v_14,v_7/v_12,v_7/v_2}{
\path[edge] (\x) -- (\y);}

\end{scope}

\end{tikzpicture}
\caption{Triangulation of Torus}
 \label{fig 1}
 \end{figure}
\end{remark}
Let $\alpha^{+}(\sigma)$ and $\alpha^{-}(\sigma)$ denote the sets of positive and negative simplices in $K$ mapping to the simplex $\sigma$ in $\mathbb{T}^{2}$, respectively. We define the algebraic number of a simplex $\sigma$ in $\mathbb{T}^{2}$ as follows:

\[
\text{alg}(\sigma) = 
\begin{cases}
Card(\alpha^{+}(\sigma)) - Card(\alpha^{-}(\sigma)), & \text{if } \sigma \text{ is positive} \\
Card(\alpha^{-}(\sigma)) - Card(\alpha^{+}(\sigma)), & \text{if } \sigma \text{ is negative}
\end{cases}
\]

Let the simplicial map $f: K \rightarrow $ $\mathbb{T}^{2}$ induce the homomorphism $f_*: H_2(K, \mathbb{Z}) \to H_2(\mathbb{T}^{2}, \mathbb{Z})$ such that $f_*([a]) = d \cdot [b]$, where $H_2(K, \mathbb{Z}) = \langle [a] \rangle$ and $H_2(\mathbb{T}^{2}, \mathbb{Z}) = \langle [b] \rangle$, and $d \in \mathbb{Z}$. Then $a$ represents the sum of all positive $2$-simplices in $K$ minus the sum of all negative $2$-simplices in $K$, and $b$ represents the sum of all positive $2$-simplices in $\mathbb{T}^{2}$ minus the sum of all negative $2$-simplices in $\mathbb{T}^{2}$. Furthermore, $d$ equals $\text{alg}(\sigma)$ for every $\sigma \in \mathbb{T}^{2}$.

\begin{proposition}\label{Composition}
  Let $K$, $L$, and $M$ be triangulated closed orientable $n$-manifolds. If $f: K \rightarrow L$ and $g: L \rightarrow M$ are simplicial maps with degrees $d$ and $d'$, respectively, then the degree of the composite map $g \circ f$ is $d'\cdot d$.
\end{proposition}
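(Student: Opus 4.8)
The plan is to reduce the statement to the functoriality of the induced homomorphism on top homology, since the degree has already been defined homologically. First I would check that the composite $g \circ f$ is itself simplicial and hence has a well-defined degree: if $\rho$ is a simplex of $K$, then $f(\rho)$ is a simplex of $L$, so $g(f(\rho))$ is a simplex of $M$; thus $g \circ f$ carries simplices to simplices and induces a homomorphism $(g \circ f)_*: H_n(K,\mathbb{Z}) \to H_n(M,\mathbb{Z})$ between groups each isomorphic to $\mathbb{Z}$. Fix generators $H_n(K,\mathbb{Z}) = \langle [a] \rangle$, $H_n(L,\mathbb{Z}) = \langle [b] \rangle$, and $H_n(M,\mathbb{Z}) = \langle [c] \rangle$; by definition of degree we have $f_*([a]) = d\,[b]$ and $g_*([b]) = d'\,[c]$.

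Next I would invoke functoriality of simplicial homology. At the chain level the composite satisfies $(g \circ f)_\# = g_\# \circ f_\#$, and passing to homology yields $(g \circ f)_* = g_* \circ f_*$. Then the computation is immediate, using that $g_*$ is a group homomorphism: $(g \circ f)_*([a]) = g_*(f_*([a])) = g_*(d\,[b]) = d\,g_*([b]) = d\,(d'\,[c]) = (d'd)\,[c]$. By the definition of degree this says precisely that the degree of $g \circ f$ is $d'd$. Note this argument is uniform in all cases, including $d = 0$ or $d' = 0$ (a non-surjective composite), where both sides vanish.

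I do not anticipate a genuine obstacle here; the two points requiring care are (i) confirming that $g \circ f$ is simplicial, so that its degree is even defined, and (ii) the linearity of $g_*$ used to pull the scalar $d$ through, both of which are standard facts about simplicial maps and induced maps on homology. The fixed coherent orientations on $K$, $L$, $M$ play no role beyond defining the generators, since the degree is exactly the invariant that records orientation behaviour.

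As a combinatorial alternative, mirroring the algebraic-count setup developed above for the surface case, I could argue directly with $\text{alg}(\cdot)$. Fixing a positive $n$-simplex $\tau$ of $M$, I would partition the $n$-simplices $\rho$ of $K$ with $(g \circ f)(\rho) = \tau$ according to their (nondegenerate) image $\sigma = f(\rho)$ in $L$ satisfying $g(\sigma) = \tau$, discarding all $\rho$ whose image under $f$ or $g \circ f$ degenerates to a lower-dimensional simplex. Using that $Card(\alpha^{+}_f(\sigma)) - Card(\alpha^{-}_f(\sigma))$ equals $d$ or $-d$ according to the sign of $\sigma$, and summing over the $\sigma$ with $g(\sigma) = \tau$, one obtains $\text{alg}_{g \circ f}(\tau) = d \cdot \text{alg}_g(\tau) = d\,d'$; the only delicate bookkeeping there is the sign tracking and the careful exclusion of degenerate images, which the homological argument sidesteps entirely.
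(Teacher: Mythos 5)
Your proof is correct: the composite of simplicial maps is simplicial, functoriality gives $(g \circ f)_* = g_* \circ f_*$ on $H_n(\cdot,\mathbb{Z})$, and linearity of $g_*$ yields $(g\circ f)_*([a]) = d'd\,[c]$, which is exactly the claimed degree. The paper states this proposition without any proof, so there is nothing to compare against; your homological argument is the standard, complete justification, and the combinatorial $\text{alg}(\cdot)$ alternative you sketch is a valid but unnecessary extra.
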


\subsection{Simplicial Volume}
\begin{definition}[Simplicial Volume]  {\rm
Let \( M \) be an oriented, closed, connected \( n \)-dimensional manifold. The {\em simplicial volume} of \( M \) (also known as the Gromov norm of \( M \)) is defined as  

\[
\|M\|^V := \inf \{ |c|_1 \mid c \in C_n(M; \mathbb{R}) \text{ is a fundamental cycle of } M \} \in \mathbb{R}_{\geq 0},
\]

where \( [M] \in H_n(M; \mathbb{R}) \) denotes the fundamental class of \( M \) with real coefficients.  }
\end{definition}

The \( \ell_1 \)-norm on the singular chain complex \( C_*(X; \mathbb{R}) \) with real coefficients is defined as follows: for a chain  

  \[
  c = \sum_{j=0}^{k} a_j \cdot \sigma_j \in C_*(X; \mathbb{R})
  \]

  (expressed in reduced form), its \( \ell_1 \)-norm is given by  

  \[
  |c|_1 := \sum_{j=0}^{k} |a_j|.
  \]

The \( \ell_1 \)-semi-norm on singular homology \( H_*(X; \mathbb{R}) \) is induced by the \( \ell_1 \)-norm on chains. More explicitly, for a topological space \( X \) and a homology class \( \alpha \in H_*(X; \mathbb{R}) \), the semi-norm is defined as  

  \[
  \|\alpha\|_1^V := \inf \{ |c|_1 \mid c \in C_*(X; \mathbb{R}) \text{ is a cycle representing } \alpha \}.
  \]

\begin{proposition}[\cite{Gromov1982}]\label{proposition}
Let \( f: M \to N \) be a map between oriented, closed, connected manifolds of the same dimension. Then, the simplicial volume of \( M \) and \( N \) satisfies the inequality  

\[
| \deg f | \cdot \| N \|^V \leq \| M \|^V
\]

where \( \| M \|^V \), \( \| N \|^V \) denote the simplicial volume of \( M \) and \( N \) , respectively. 
\end{proposition}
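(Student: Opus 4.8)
The plan is to derive the inequality from two ingredients: the non-increasing behavior of the $\ell_1$-seminorm under continuous maps, and the degree relation $f_*([M]) = \deg f \cdot [N]$ on top-dimensional homology. The first is the substance of the argument; the second is built into the definition of degree.

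First I would establish the chain-level estimate. Given the continuous map $f \colon M \to N$, the induced chain map $f_\# \colon C_*(M;\mathbb{R}) \to C_*(N;\mathbb{R})$ sends a singular simplex $\sigma$ to the composite $f \circ \sigma$. For a chain $c = \sum_j a_j \cdot \sigma_j$ written in reduced form, we have $f_\#(c) = \sum_j a_j \cdot (f \circ \sigma_j)$. Since distinct simplices $\sigma_j$ may coincide after composing with $f$, regrouping into reduced form can only merge terms and allow cancellation, whence $|f_\#(c)|_1 \leq \sum_j |a_j| = |c|_1$. Because $f_\#$ commutes with the boundary operator, it carries cycles to cycles and descends to $f_*$ on homology. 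Taking the infimum over all cycles representing a class $\alpha \in H_*(M;\mathbb{R})$ then yields the functoriality estimate $\|f_*(\alpha)\|_1^V \leq \|\alpha\|_1^V$.

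Next I would specialize to $\alpha = [M]$, the fundamental class. By definition $\|M\|^V = \|[M]\|_1^V$, so the estimate gives $\|f_*([M])\|_1^V \leq \|M\|^V$. Invoking the degree relation $f_*([M]) = \deg f \cdot [N]$ together with the positive homogeneity $\|\lambda \cdot \beta\|_1^V = |\lambda| \cdot \|\beta\|_1^V$ of the seminorm, which follows by scaling the coefficients of any representing cycle, the left-hand side rewrites as $|\deg f| \cdot \|[N]\|_1^V = |\deg f| \cdot \|N\|^V$. Combining the two relations produces the claimed inequality $|\deg f| \cdot \|N\|^V \leq \|M\|^V$.

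The only delicate point is the chain-level estimate $|f_\#(c)|_1 \leq |c|_1$: one must argue carefully that passing to reduced form after applying $f_\#$ can never increase the $\ell_1$-norm, since the coefficients are merely regrouped and possibly cancelled, never amplified. Everything else is formal, resting on functoriality of $f_\#$, the definition of the fundamental class, and homogeneity of the seminorm. I would expect no genuine obstacle beyond this bookkeeping, as the statement is precisely Gromov's foundational observation that simplicial volume is monotone under maps of nonzero degree.
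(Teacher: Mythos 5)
Your argument is correct and complete: the paper itself gives no proof of this proposition, citing it directly from Gromov's \emph{Volume and bounded cohomology}, and what you have written is precisely the standard derivation --- the $\ell_1$-norm does not increase under the induced chain map $f_\#$, hence the seminorm is functorially non-increasing on homology, and combining this with $f_*([M]) = \deg f \cdot [N]$ and homogeneity of the seminorm yields the inequality. The one point you flag as delicate, that reducing $f_\#(c)$ can only merge and cancel coefficients, is handled correctly by the triangle inequality, so there is no gap.
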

It is well known (cf. \cite{Gromov1982}) that for any oriented, closed, connected surface \( S_g \) of genus \( g \geq 1 \), the simplicial volume is given by  

\[
\| S_g \|^V = 4g - 4.
\]
\begin{remark}\label{rem 5}
{\rm
Let $\Sigma_{g_1}$ and $\Sigma_{g_2}$ be triangulated orientable surfaces of genus $g_1$ and $g_2$, respectively, where $g_1$, $g_2$ $\in \mathbb{N}$ and $g_1$, $(g_2>1)$ . If we replace $M$ and $N$ in Proposition \ref{proposition} with $\Sigma_{g_1}$ and $\Sigma_{g_2}$, respectively, then the degree of any simplicial map $f:\Sigma_{g_1} \rightarrow \Sigma_{g_2}$, satisfies \[
| \deg f |  \leq  \frac{g_1-1}{g_2-1}.
\]
The following observations can be made from this inequality:
\begin{enumerate}[$(i)$]
   
    \item If $g_1=g_2=g$ with $g>1$, then the only possible degrees for simplicial maps from  $\Sigma_g$ to $\Sigma_g$ are $\pm 1$ and 0.
    \item If $g_1> g_2>1$, then there exists a degree $d$ map from $\Sigma_{g_1}$ to $\Sigma_{g_2}$, where $d\in [-\lfloor \frac{g_1-1}{g_2-1}\rfloor, \lfloor \frac{g_1-1}{g_2-1}\rfloor]\cap \mathbb{Z}$.
    \item If $g_2>g_1$, the only possible simplicial map from $\Sigma_{g_1}$ to $\Sigma_{g_2}$ has degree 0.

\end{enumerate}
}
\end{remark}

\section{Degree $d$ maps from  genus $g$ surfaces to the torus}
For each degree $d\in \mathbb{Z}$, there exists a degree $d$ simplicial map from $\Sigma_g$ to $\Sigma_1$, with $g \geq 1$.
 \begin{lemma}
  Let $\mathbb{T}^{2}$ be a $7$-vertex triangulated torus. If the orientation is fixed such that the simplex $[v_1v_2v_4]$ is positively oriented, there does not exist a simplicial map of degree $-1$ from $\mathbb{T}^{2}$ to itself.  
 \end{lemma}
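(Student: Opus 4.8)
The plan is to show that a degree $-1$ self-map must be a simplicial automorphism of the $7$-vertex torus, and then that every automorphism of this triangulation preserves orientation, which is a contradiction. As preparation I would record, from the orientation fixed by declaring $[v_1v_2v_4]$ positive, the signs of all $14$ triangles by propagating coherence across shared edges: the seven triangles $[v_1v_2v_4],[v_2v_3v_5],[v_1v_5v_6],[v_2v_6v_7],[v_1v_3v_7],[v_4v_5v_7],[v_3v_4v_6]$ turn out to be positive and the remaining seven negative. A useful observation is that, identifying the vertices with $\mathbb{Z}_7$ via $v_i\leftrightarrow i$, each of these two families is a single orbit under the shift $i\mapsto i+1$ (indices mod $7$).

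\emph{Step 1 (reduction to automorphisms).} Suppose $f\colon\mathbb{T}^2\to\mathbb{T}^2$ has degree $-1$. Then $\mathrm{alg}(\sigma)=-1$ for every triangle $\sigma$ of the target, so $\mathrm{Card}(\alpha^{+}(\sigma))-\mathrm{Card}(\alpha^{-}(\sigma))=\pm1$; in particular $\mathrm{Card}(\alpha^{+}(\sigma))+\mathrm{Card}(\alpha^{-}(\sigma))$ is odd and hence at least $1$, so every target triangle is the image of at least one non-degenerate source triangle. Since the source also has exactly $14$ triangles, summing over the $14$ target triangles forces each to have exactly one non-degenerate preimage and forbids any source triangle from collapsing to an edge or vertex. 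Consequently no two vertices can share an image: the $1$-skeleton is $K_7$, so any pair $v_i,v_j$ lies on a common triangle, and $f(v_i)=f(v_j)$ would make that triangle degenerate. Thus $f$ is a bijection on vertices carrying the triangle set to itself, i.e.\ a simplicial automorphism of $\mathbb{T}^2$.

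\emph{Step 2 (automorphisms preserve orientation).} I would first verify that the shift $s\colon i\mapsto i+1$ and a multiplier $m\colon i\mapsto 3i$ are automorphisms (they permute the two $\mathbb{Z}_7$-orbits of triangles), so $\mathrm{Aut}(\mathbb{T}^2)$ contains the affine group $\{i\mapsto ai+b : a\in\mathbb{Z}_7^{\times},\,b\in\mathbb{Z}_7\}$ of order $42$. That this is all of $\mathrm{Aut}(\mathbb{T}^2)$ follows from a link computation: the action is vertex-transitive, the stabiliser of a vertex acts faithfully on its link (a $6$-cycle) and hence embeds in $D_6$, it already contains the order-$6$ rotation subgroup coming from the multipliers, and no reflection of the link extends to the whole complex (a single reflection is seen to send some triangle to a non-triangle). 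Hence $|\mathrm{Aut}(\mathbb{T}^2)|=42$. By Proposition~\ref{Composition} the degree is a homomorphism $\mathrm{Aut}(\mathbb{T}^2)\to\{\pm1\}$. Every map with $a$ a quadratic residue ($a\in\{1,2,4\}$) lies in the order-$21$ subgroup $\{i\mapsto ai+b\}$, all of whose elements have odd order and therefore map to $+1$. It then suffices to evaluate the degree on one element outside this subgroup, for instance the involution $i\mapsto -i$, which sends $[v_1v_2v_4]$ to $[v_3v_5v_6]$; combining the source sign, the target sign, and the sign of the permutation resorting the image vertices shows this map has degree $+1$. Since both the odd-order subgroup and this coset representative lie in the kernel, the homomorphism is trivial, so every automorphism has degree $+1$, contradicting $\deg f=-1$.

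The step I expect to be most delicate is the orientation bookkeeping in Step 2: the sign an automorphism contributes to a triangle is the product of the source sign, the target sign, and the sign of the permutation that re-sorts the image vertices, and a single misread transposition flips the answer, so I would double-check the degree-$+1$ computations on $s$, $m$, and $i\mapsto -i$ independently. The other point requiring care is the upper bound $|\mathrm{Aut}(\mathbb{T}^2)|=42$ (equivalently, the chirality of the $7$-vertex torus), although this may instead be quoted from the classical description of the Möbius--Császár torus. By contrast, the reduction in Step 1 is robust and purely combinatorial, relying only on the parity of $\mathrm{Card}(\alpha^{+})+\mathrm{Card}(\alpha^{-})$ and on counting the $14$ triangles.
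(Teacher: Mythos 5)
Your proposal is correct and follows the same overall strategy as the paper --- reduce to the automorphism group of the $7$-vertex torus and show that every automorphism has degree $+1$ --- but your execution differs in two ways worth recording. First, your Step~1 supplies a reduction the paper leaves implicit: the paper passes from ``no \emph{bijective} simplicial self-map has degree $-1$'' directly to ``no simplicial self-map has degree $-1$,'' and your parity count (each of the $14$ target triangles has an odd, hence positive, number of non-degenerate preimages, so each has exactly one, so no triangle collapses, so vertex-injectivity follows from the complete $1$-skeleton) is precisely the missing step. Second, where the paper lists all $42$ automorphisms and asserts by inspection that each preserves orientation, you identify $\mathrm{Aut}(\mathbb{T}^2)$ with the affine group of $\mathbb{Z}_7$ and reduce the whole orientation check to the single element $i\mapsto -i$, using that the degree homomorphism to $\{\pm 1\}$ kills the odd-order index-$2$ subgroup; this is cleaner and less error-prone than $42$ case checks, at the cost of having to justify $|\mathrm{Aut}(\mathbb{T}^2)|=42$ (your link/chirality sketch is sound, and the fact is classical). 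Your sign data check out: the orbit $\{i,i+1,i+3\}$ is the positive one, and $[v_1v_2v_4]\mapsto [v_6v_5v_3]=-[v_3v_5v_6]$ gives degree $(+1)\cdot(-1)\cdot(-1)=+1$. One point in your favor that you should keep explicit: your three-sign bookkeeping (source sign, target sign, and the sign of the permutation re-sorting the image vertices) is the correct chain-level computation; the paper's formula for $\mathrm{alg}(\sigma)$, read literally without the permutation sign, would wrongly assign degree $-1$ to $i\mapsto -i$, so your care here is not optional but necessary for the conclusion that all $42$ automorphisms preserve orientation.
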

 \begin{proof}
Consider all bijective simplicial maps from the $7$-vertex triangulated torus to itself. The 42 generators of the automorphism group, which induce simplicial maps from the torus to itself, are given by:

\begin{center}
Identity, (2,3,5)(4,7,6), (2,4,3,7,5,6), (2,5,3)(4,6,7), (2,6,5,7,3,4), (2,7)(3,6)(4,5), 
(1,2)(3,7)(4,6), (1,2,3,4,5,6,7), (1,2,4)(3,6,5), (1,2,5,7,6,3), (1,2,6)(4,7,5), (1,2,7,4,3,5), (1,3,6,7,5,2), (1,3)(4,7)(5,6), (1,3,5,7,2,4,6), (1,3,7)(2,5,4), (1,3,2,6,4,5), (1,3,4)(2,7,6), (1,4,2)(3,5,6), (1,4,5,3,7,6), (1,4)(2,3)(5,7), (1,4,7,3,6,2,5), (1,4,3)(2,6,7), (1,4,6,5,2,7), (1,5,3,4,7,2), (1,5,7)(3,6,4), (1,5,4,6,2,3), (1,5)(2,4)(6,7), (1,5,2,6,3,7,4), (1,5,6)(2,7,3), (1,6,2)(4,5,7), (1,6,7,3,5,4), (1,6,5)(2,3,7), (1,6,3,2,4,7), (1,6)(2,5)(3,4), (1,6,4,2,7,5,3), (1,7,6,5,4,3,2), (1,7,5)(3,4,6), (1,7,4,2,3,6), (1,7,3)(2,4,5), (1,7,2,5,6,4), (1,7)(2,6)(3,5). 
\end{center}
By relabeling the vertices $v_i$ $(1\leq i\leq 7)$ of the triangulation in Figure \ref{fig 1} with $i$, we observe that all these maps preserve orientation. That is, each bijective simplicial map sends a positively oriented simplex to another positively oriented simplex and a negatively oriented simplex to another negatively oriented simplex. Since a simplicial map of degree $-1$ must reverse orientation, none of these $42$ bijective simplicial maps can have degree $-1$. Therefore, no simplicial self-map of the $7$-vertex triangulated torus attains degree $-1$ while preserving the given orientation.
 \end{proof}

 \begin{remark}\label{negative degree}
 {\rm 
To construct a simplicial map of degree $-1$ from $7$-vertex triangulated torus $\mathbb{T}^{2}$ to itself, it is necessary to reverse the orientation of the triangulated torus in the domain. Under this transformation, the simplex $[v_1v_2v_4]$ in the domain becomes a negatively oriented simplex, which implies that all positive simplices become negative and vice versa. Thus, the identity map $\mathbb{I}$ now induces a simplicial map of degree \(-1\). More generally, let $f:\Sigma_g \rightarrow \mathbb{T}^{2}$ be a simplicial map, where $\Sigma_g$ is a triangulated orientable surface of genus $g$ ($g \geq 1$), and $\mathbb{T}^{2}$ is a $7$-vertex triangulation of the torus. If $f$ has a degree $d$ simplicial map, then $f\circ \mathbb{I}$ defines a simplicial map of degree $-d$ from the triangulated $\Sigma_g$ to $\mathbb{T}^{2}$.
}
\end{remark}
    
\begin{theorem}\label{thm1}

    For every $|d|\geq 2g-1$, where $g\geq 1$, there exists a minimal degree $d$ simplicial map from a $(7|d|+2-2g)$-vertex triangulated genus $g$ orientable surface to a $7$-vertex triangulated genus $1$ orientable surface.
\end{theorem}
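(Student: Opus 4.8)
The plan is to split the statement into a matching lower and upper bound: first show that \emph{every} degree $d$ simplicial map from a triangulated genus $g$ surface to $\mathbb{T}^2$ uses at least $7|d|+2-2g$ vertices, and then exhibit a triangulation meeting this bound. By Remark \ref{negative degree} it is enough to treat $d>0$, since precomposing with the orientation-reversing identity converts a degree $d$ map into a degree $-d$ map on the same complex; and since $g\geq 1$ gives $2g-1\geq 1$, I may assume $d\geq 2g-1>0$. For the lower bound, let $f\colon K\to\mathbb{T}^2$ be any degree $d$ simplicial map with $K$ a genus $g$ surface. As recorded just before the statement, $\text{alg}(\sigma)=d$ for each of the $14$ triangles $\sigma$ of $\mathbb{T}^2$, so $|Card(\alpha^{+}(\sigma))-Card(\alpha^{-}(\sigma))|=d$ and $\sigma$ has at least $d$ nondegenerate preimage triangles. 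As a simplicial map sends each triangle of $K$ to a single simplex, these preimage sets are disjoint, whence $f_2(K)\geq 14d$. Since $K$ is a closed triangulated surface, $2f_1(K)=3f_2(K)$ and $\chi(K)=f_0(K)-\tfrac12 f_2(K)=2-2g$, so $f_0(K)=2-2g+\tfrac12 f_2(K)\geq 7d+2-2g$. This is exactly the asserted minimality.

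For the upper bound I would realize $K$ as an explicit simplicial degree $d$ branched cover of $\mathbb{T}^2$ having a single branch point over the vertex $v_1$ with ramification index $2g-1$. Take $d$ coherently oriented copies (sheets) of the triangulation of Figure \ref{fig 1}, with vertices $v_j^{(i)}$ for $1\leq i\leq d$, and set $f(v_j^{(i)})=v_j$. The star $\st v_1$ is the cone, with apex $v_1$, over the $6$-cycle $(v_2,v_4,v_3,v_7,v_5,v_6)$. Over the complement of $\st v_1$ the $d$ sheets are glued by a sheet permutation chosen so that $K$ is connected and the monodromy around $v_1$ is a single $(2g-1)$-cycle on sheets $1,\dots,2g-1$ fixing the rest. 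Inside $\st v_1$ I then cut sheets $1,\dots,2g-1$ along the edge $[v_1v_2]$ and reglue them cyclically, coning the resulting $6(2g-1)$-gon over one new apex $w$; thus $v_1^{(1)},\dots,v_1^{(2g-1)}$ collapse to the single vertex $w$, while sheets $2g,\dots,d$ retain their own apices $v_1^{(i)}$. This swaps $2g-1$ hexagonal disks (each with $6$ triangles) for one disk with the same $6(2g-1)$ triangles, so the triangle total is unchanged, and $f$ extends simplicially by $f(w)=v_1$.

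It then remains to verify the numerics and that $f$ has degree $d$. Each sheet carries the orientation of $\mathbb{T}^2$ and every identification reverses the induced orientation on the glued edges, so $K$ is orientable and $f$ is orientation preserving on each sheet; hence every triangle of $\mathbb{T}^2$ has exactly $d$ positively counted preimages, giving $\text{alg}(\sigma)=d$ and $\deg f=d$. Counting faces, the six non-apex vertex classes contribute $6d$ vertices and the fibre over $v_1$ contributes $d-2g+2$, so $f_0(K)=7d+2-2g$, while $f_2(K)=14d$ and $f_1(K)=21d$; thus $\chi(K)=2-2g$. Together with connectedness and the surface property this pins down the genus as $g$, and the lower bound makes $K$ minimal.

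The crux of the argument is the branch-point surgery and the accompanying global gluing. One must check that after the cyclic regluing the link of $w$ is a single $6(2g-1)$-cycle, so that $K$ is a genuine manifold at $w$ rather than a one-point union of $2g-1$ surfaces, and that coherent orientations survive. More delicate is producing a sheet permutation over the complement of $\st v_1$ that simultaneously makes $K$ connected and has commutator monodromy equal to a single $(2g-1)$-cycle around $v_1$; this is a Hurwitz-type realizability problem for the once-punctured torus, and the hypothesis $d\geq 2g-1$ is precisely the requirement that $2g-1$ sheets are available to be merged at the branch point. Establishing this realizability explicitly, rather than the face-number bookkeeping, is where the real work lies.
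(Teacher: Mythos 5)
Your lower bound is correct and is essentially the paper's own argument: $\text{alg}(\sigma)=d$ forces at least $d$ preimage triangles over each of the $14$ triangles of $\mathbb{T}^2$, these preimage sets are disjoint, and the Euler characteristic relation $f_0(K)=\tfrac12 f_2(K)+2-2g$ gives $f_0(K)\geq 7|d|+2-2g$. The construction is where your proposal diverges from the paper and where it has a genuine gap. You reduce the existence of the required triangulation to a Hurwitz realizability statement: the existence of permutations $\alpha,\beta\in S_d$ generating a transitive subgroup whose commutator $[\alpha,\beta]$ is a single $(2g-1)$-cycle. You correctly identify this as the crux, but you do not prove it; you only note that $d\geq 2g-1$ makes it numerically possible. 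As written, the surface $K$ has not been shown to exist. The statement is in fact true (it is a special case of the realizability of branch data over the torus, and explicit $\alpha,\beta$ can be produced for this datum), but it is not automatic --- naive choices such as $\alpha=(1\,2\,\cdots\,d)$ with $\beta$ a cycle do not yield a $(2g-1)$-cycle commutator once $g\geq 3$ --- so it cannot be waved through. A secondary, more routine omission is the verification that the pullback of the $7$-vertex triangulation under the branched cover, after merging the $2g-1$ ramified apices into the single vertex $w$, is an honest simplicial complex (e.g.\ that no two lifts of an edge $[v_1v_j]$ share both endpoints); this does follow from the link of $w$ being a single $6(2g-1)$-cycle on distinct vertices, but it needs to be said.

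The paper avoids the monodromy problem entirely: it triangulates the standard $4g$-gon model of $\Sigma_g$ directly, cutting it into $2g-1+l$ quadrilaterals (where $d=2g-1+l$), triangulating each quadrilateral as the fundamental square of the $7$-vertex torus of Figure \ref{fig 1}, and labelling vertices so that $u_{(i,j)}\mapsto v_i$ is simplicial of degree $d$; the count $6(2g-1+l)+(l+1)=7d+2-2g$ then meets the lower bound. If you supply the realizability step, your branched-cover construction becomes a legitimate and arguably more conceptual alternative, but at present the paper's explicit construction does exactly the work that your proposal defers.
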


\begin{proof}

As we know, the genus $1$ orientable surface is a torus, and its minimal vertex triangulation is described in Remark \ref{orientation}. We define a simplicial map of negative degree, as in Remark \ref{negative degree}, using a simplicial map of positive degree composed with a simplicial map of degree $-1$. Any non-surjective simplicial map is considered to have degree $0$. Therefore, in this proof, our primary focus is on simplicial maps of positive degree $d$. 

To define a simplicial map for $d>0$, we first construct a triangulation of $\Sigma_g$. We begin with a regular $4g$-gon, where opposite sides are identified and all corner vertices coincide, denoted as $u_{(1,1)}$. Within this $4g$-gon, we introduce new vertical internal $2(g-1)$ edges, subdividing it into $2g-1$ quadrilaterals. We label the sides of the $4g$-gon as 
$\alpha_1\beta_1\alpha_2\beta_2\cdots\alpha_g\beta_g\alpha_1^{-1}\beta_1^{-1}\cdots\alpha_g^{-1}\beta_g^{-1}$, where each $\alpha_i$ and $\beta_i$ represent the loops that generate the fundamental group of the surface. Moreover, $\alpha_i^{-1}$ and $\beta_i^{-1}$  indicate the same loops traversed in the reverse direction, corresponding to the inverse elements in the fundamental group. Choose a starting side, which is labeled $\alpha_1$, in such a way that two sides of the middle rectangle of the $4g$-gon become $\beta_{g/2}$ and $\beta_{g/2}^{-1}$ when $g$ is even, and $\alpha_{(g+1)/2}$ and $\alpha_{(g+1)/2}^{-1}$ when $g$ is odd.

For $d\geq 2g-1$, suppose $d = (2g-1) + l$, where $l \in \mathbb{N}\cup\{0\}$. We further refine the $g^{\text{th}}$ quadrilateral (the middle rectangle) by introducing $l$ additional vertical lines. 
Since opposite sides of the $4g$-gon are identified, the endpoints of these vertical lines correspond to the same set of vertices, labeled as $u_{(1,j)}$ for $2\leq j\leq l+1$. 
This process partitions the quadrilateral into $l+1$ new rectangles, resulting in a total of $2g-1+l$ quadrilaterals in the $4g$-gon. To obtain a triangulation of $\Sigma_g$, we triangulate each of these quadrilaterals in the same way as we do in the torus, as shown in Figure \ref{fig 1}.

To assign vertices to the triangulated surface $\Sigma_g$, we categorize them based on their placement on the left side, upper side, and interior part of each quadrilateral in the construction. For the first quadrilateral, the vertices on the left side are $u_{(1,1)}$, $u_{(5,1)}$, $u_{(4,1)}$, $u_{(1,1)}$, while those on the upper side are  $u_{(1,1)}$, $u_{(2,1)}$, $u_{(3,1)}$, $u_{(1,1)}$, and the interior part consists of the vertices $u_{(6,1)}$, $u_{(7,1)}$. Similarly, for the $i^{\text{th}}$ quadrilateral, where $i\leq g-1$, the left side has vertices $u_{(1,1)}$, $u_{(5,i)}$, $u_{(4,i)}$, $u_{(1,1)}$, the upper side has $u_{(1,1)}$, $u_{(2,i)}$, $u_{(3,i)}$, $u_{(1,1)}$, and the interior part contains $u_{(6,i)}$, $u_{(7,i)}$. For the $g^{\text{th}}$ quadrilateral, the left side follows the same pattern as $i\leq g-1$, with vertices $u_{(1,1)}$, $u_{(5,g)}$, $u_{(4,g)}$, $u_{(1,1)}$, while the upper side is assigned $u_{(1,1)}$, $u_{(2,g)}$, $u_{(3,g)}$, and $u_{(1,2)}$, and the interior part remains  $u_{(6,g)}$, $u_{(7,g)}$. Moving to the ${(g+j)}^{\text{th}}$ quadrilateral, where $1\leq  j\leq l-1$, the left side contains $u_{(1,j+1)}$, $u_{(5,g+j)}$, $u_{(4,g+j)}$, $u_{(1,j+1)}$, the upper side consists of  $u_{(1,j+1)}$, $u_{(2,g+j)}$, $u_{(3,g+j)}$, and $u_{(1,j+2)}$, and the interior part contains $u_{(6,g+j)}$, $u_{(7,g+j)}$. For the ${(g+l)}^{\text{th}}$ quadrilateral, the left side is given by $u_{(1,l+1)}$, $u_{(5,g+l)}$, $u_{(4,g+l)}$, $u_{(1,l+1)}$, while the upper side consists of $u_{(1,l+1)}$, $u_{(2,g+l)}$, $u_{(3,g+l)}$, $u_{(1,1)}$, and the interior part has $u_{(6,g+l)}$, $u_{(7,g+l)}$. For the $(g+j)^{\text{th}}$ quadrilateral, where $l+1\leq  j\leq g-1+l$, the left side is assigned $u_{(1,1)}$, $u_{(5,g+j)}$, $u_{(4,g+j)}$, $u_{(1,1)}$, while the upper side consists of  $u_{(1,1)}$, $u_{(2,g+j)}$, $u_{(3,g+j)}$, $u_{(1,1)}$, and the interior part has $u_{(6,g+j)}$, $u_{(7,g+j)}$. In terms of the relationships between quadrilaterals, the left side of each $i^{\text{th}}$ quadrilateral serves as the right side of $(i-1)^{\text{th}}$ quadrilateral, except for $i=1$, where the left side of the first quadrilateral corresponds to the right side of the $(2g-1+l)^{th}$ quadrilateral. The bottom side of each $i^{\text{th}}$ quadrilateral aligns with the upper side of the $(2g-i+l)^{\text{th}}$ quadrilateral, and vice versa, for $1\leq i\leq g-1$. For the remaining quadrilateral, the upper and bottom sides remain the same. Thus, we have systematically assigned vertices to the triangulated surface $\Sigma_g$. 
Each quadrilateral contains six distinct vertices, except $u_{(1,1)}$ and $u_{(1,j+1)}$, where $1\leq j\leq l$. Consequently, the total number of vertices in the triangulated $\Sigma_g$ is given by $6(2g-1+l)+(l+1)$ $=$ $7d+2-2g$. An example of such a triangulation of $\Sigma_2$ is shown in Figures \ref{fig 2} and \ref{fig 3}, where the refined middle rectangle is clearly visible, and the surface is subdivided into five quadrilaterals by the construction described above. Using this triangulation, we define a simplicial map of degree $2$ from $\Sigma_2$ to $\mathbb{T}^2$, as explained in the following paragraphs.

\begin{figure}[H]
\tikzstyle{ver}=[]
\tikzstyle{vertex}=[circle, draw, fill=black!60, inner sep=0pt, minimum width=3pt]
\tikzstyle{edge} = [draw,black,thick,-]
\centering

\begin{tikzpicture}[scale=0.5]
\begin{scope}[shift={(0,0)}]
\foreach \x/\y/\z in {0/0/l_1,1/-1/l_2,2/-2/l_3,3/-3/l_4,7.242/-3/l_7,8.242/-2/l_8,9.242/-1/l_9,10.242/0/l_10,10.242/1.414/l_11,10.242/2.828/l_12,10.242/4.242/l_13,9.242/5.242/l_14,8.242/6.242/l_15,7.242/7.242/l_16,3/7.242/l_19,2/6.242/l_20,1/5.242/l_21,0/4.242/l_22,0/2.828/l_23,0/1.4141/l_24,2/3.6/l_25,2.4/1.5/l_26,3/3.9/l_27,3/1.4/l_28,7.242/3.9/l_29,7.242/1.6/l_30,9.6/1.6/l_31,9.242/3.5/l_32}{
\node[vertex] (\z) at (\x,\y){};}

\foreach \x/\y/\z in {-0.75/0/u_{(1,1)},0.75/-1.34/u_{(2,5)},1.7/-2.3/u_{(3,5)},2.6/-3.3/u_{(1,1)}, 7.57/-3.4/u_{(1,1)},8.7/-2.45/u_{(2,1)},9.7/-1.45/u_{(3,1)},10.7/-0.45/u_{(1,1)},11.1/1.3/u_{(5,1)},11.1/2.9/u_{(4,1)},11.1/4.3/u_{(1,1)},10.1/5.2/u_{(3,5)},9.1/6.242/u_{(2,5)},8.1/7.3/u_{(1,1)},2.3/7.3/u_{(1,1)},1.3/6.3/u_{(3,1)},0.3/5.3/u_{(2,1)},-0.8/4.242/u_{(1,1)},-0.8/2.828/u_{(4,1)},-0.8/1.4141/u_{(5,1)}}{\node[ver] () at (\x,\y){{\fontsize{8pt}{7.6pt}\selectfont $\z$}};}
\foreach \x/\y/\z in {1.75/1.4/u_{(7,1)},1.8/3.9/u_{(6,1)},9.1/3.8/u_{(6,5)},9.1/1.8/u_{(7,5)}} {
    \node[ver] at (\x,\y) {\fontsize{4pt}{1pt}\selectfont $\z$};
}

 \foreach \x/\y in {l_1/l_2,l_2/l_3,l_3/l_4,l_4/l_7,l_7/l_8,l_8/l_9,l_9/l_10,l_10/l_11,l_11/l_12,l_12/l_13,l_13/l_14,l_14/l_15,l_15/l_16,l_16/l_19,l_19/l_20,l_20/l_21,l_21/l_22,l_22/l_23,l_23/l_24,l_24/l_1, l_3/l_26, l_26/l_25,l_25/l_20,
 l_4/l_19,l_16/l_7,l_31/l_9, l_31/l_32, l_32/l_14,l_21/l_23,l_21/l_24,l_24/l_20,l_20/l_27,l_25/l_24,l_25/l_1,l_25/l_2,l_26/l_2,l_26/l_4,l_26/l_28,l_26/l_27,l_25/l_27,l_15/l_29,l_15/l_30,l_30/l_14,l_14/l_12,l_32/l_30,l_32/l_7,l_32/l_8,l_31/l_8,l_32/l_12,l_12/l_31,l_31/l_11,l_31/l_10}{
 \path[edge] (\x) -- (\y);}
  
    \fill[gray!20] (3.1,-2.9) -- (3.1,7.1) -- (7.15,7.1) -- (7.15,-2.9) -- cycle;
    
\end{scope}

\end{tikzpicture}
\caption{Triangulation of $\Sigma_2$ for a degree $5$ simplicial map}
\label{fig 2}
\end{figure}
  \begin{figure}[H]
\tikzstyle{ver}=[]
\tikzstyle{vertex}=[circle, draw, fill=black!60, inner sep=0pt, minimum width=2pt]
\tikzstyle{edge} = [draw, black!80, thick, -]
\centering

\begin{tikzpicture}[scale=0.56]
\begin{scope}[shift={(0,0)}]
\foreach \x/\y/\z in {3/-3/l_12,4/-3/l_22,5.828/-3/l_32,7.242/-3/l_13,8/-3/l_23,10.07/-3/l_33,11.484/-3/s_2,12.898/-3/n_1,14.312/-3/n_2,15.726/-3/n_3,
15.726/7.242/n_11,14.312/7.242/n_12,12.898/7.242/n_13,11.484/7.242/s_1,10.07/7.242/u_33,8.656/7.242/u_23,7.242/7.242/u_13,5.828/7.242/u_32,4.414/7.242/u_22,3/7.242/r_1,3/1.414/u_52,3/4.242/u_42,5.828/1.414/u_72,5.828/4.242/u_62,7.242/1.6/u_53,7.242/4.242/u_43,10.07/1.414/u_73,10.07/4.242/u_63,11.484/1.414/s_5,11.484/4.242/s_4,
14.312/4.242/m_1,14.312/1.414/m_2,15.726/4.242/m_3,15.726/1.414/m_4}{
\node[vertex] (\z) at (\x,\y){};}

\foreach \x/\y/\z in { 2.81/-3.3/u_{(1,1)} ,4.414/-3.3/u_{(2,2)},5.828/-3.3/u_{(3,2)},7.242/-3.3/u_{(1,2)} ,8.656/-3.3/u_{(2,3)},10.07/-3.3/u_{(3,3)},11.484/-3.3/u_{(1,3)},12.898/-3.3/u_{(2,4)},14.312/-3.3/u_{(3,4)},15.726/-3.3/u_{(1,1)},
8.656/7.542/u_{(2,3)},8.656/7.542/u_{(2,3)},10.07/7.542/u_{(3,3)},11.484/7.542/u_{(1,3)},12.898/7.542/u_{(2,4)},14.312/7.542/u_{(3,4)},15.726/7.542/u_{(1,1)},
7.4/7.542/u_{(1,2)},5.828/7.542/u_{(3,2)},
4.414/7.542/u_{(2,2)},2.999/7.542/u_{(1,1)},2.3/1.4/u_{(5,2)},2.3/4.2/u_{(4,2)},5.2/4.2/u_{(6,2)},
7.8/1.4/u_{(5,3)},9.4/4.2/u_{(6,3)}}
{\node[ver, black]  () at (\x,\y){{\fontsize{7pt}{4pt}\selectfont $\z$}};}

\foreach \x/\y/\z in {5.3/1.3/u_{(7,2)},7.8/4.2/u_{(4,3)},9.5/1.3/u_{(7,3)},12.1/4.2/u_{(4,4)}, 12.1/1.2/u_{(5,4)},13.7/1.3/u_{(7,4)},13.7/4.2/u_{(6,4)},16.3/1.3/u_{(5,5)},16.3/4.2/u_{(4,5)}}{\node[ver] () at (\x,\y){{\fontsize{7pt}{4pt}\selectfont $\z$}};}

  \foreach \x/\y in {l_12/l_22,l_22/l_32,l_32/l_13,l_13/l_23,l_23/l_33,l_33/s_2,s_2/n_3,n_3/n_11,n_11/r_1,r_1/l_12,u_32/l_32,u_13/l_13,u_33/l_33,s_1/s_2,n_12/n_2,
u_22/u_42,u_52/u_22,u_52/u_32,u_32/u_62,u_62/u_72,u_72/l_32,u_62/u_52,u_62/l_12,u_62/l_22,l_13/u_53,u_53/u_72,u_72/u_43,u_43/u_53,u_43/u_62,u_43/u_32,u_43/u_13,l_13/u_72, u_72/l_22, u_23/u_43,u_53/u_23,u_53/u_33,u_33/u_63,u_63/u_73,u_63/s_4,u_73/l_33,u_63/u_53,u_63/l_13,u_63/l_23,s_2/s_5,s_5/s_4,s_4/s_1,u_73/l_23,u_33/s_4,s_4/u_73,u_73/s_5,u_73/s_2,n_13/s_4,n_13/s_5,n_12/s_5,m_1/s_5,m_1/s_2,m_1/n_1,m_2/n_1,m_2/n_3,m_2/m_4,m_2/m_3,m_1/m_3,n_12/m_3,n_11/m_3,m_3/m_4,m_4/n_3}{
 \path[edge] (\x) -- (\y);}
\end{scope}

\end{tikzpicture}
\caption{Triangulation of shaded region of $\Sigma_2$}
\label{fig 3}

\end{figure}
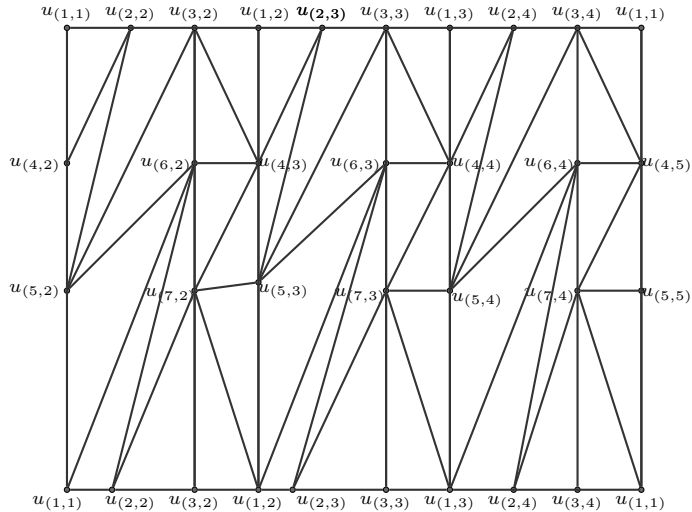

The minimal triangulation of $\Sigma_g$ corresponds to the degree $d$ simplicial map from $\Sigma_g$ to $\Sigma_1$. We can see from Euler's characteristic formula $n-e+f=2-2g$ and $2e=3f$, where $n$, $e$, and $f$ denote the total number of vertices, edges, and triangles, respectively. Combining these equations implies that $n=\frac{f}{2} + 2 -2g$. Since the minimal triangulation of $\Sigma_1$ consists of $14$ triangles and the preimage of each triangle under the degree $d$ map contains at least $d$ triangles, the total number of triangles in $\Sigma_g$ is at least $14d$, which ensures that at least $7d+2-2g$ vertices are required for the triangulation of $\Sigma_g$.

Define a simplicial map $f:\Sigma_g \rightarrow \Sigma_1$ by sending $u_{(i,j)}$ to $v_i$. This map induces a homomorphism $f_*:H_2(\Sigma_g, \mathbb{Z})\rightarrow H_2(\Sigma_1, \mathbb{Z})$. Consider $\sigma:=$ [$v_1v_2v_4$] as a positive simplex in $\Sigma_1$, and $\tau:= [u_{(1,1)}u_{(2,1)}u_{(4,1)}]$ as a positive simplex in $\Sigma_g$. From the construction, we observe that the preimage of each $2$-simplex in $\sigma \in \mathbb{T}^{2}$ contains exactly $d$ $2$-simplices from the triangulation of $\Sigma_g$, each carrying the same sign as its image simplex. The algebraic number of the $\sigma$ is $d$. Thus, this construction provides a degree $d$ map from $\Sigma_g$ to $\Sigma_1$.
\end{proof}

\begin{corollary}\label{Coro.}
There exists a minimal degree 
$d$ simplicial map from the triangulated torus to the $7$-vertex triangulated torus $\mathbb{T}^{2}$, as well as from a triangulated orientable surface of genus 
$2$ to the torus $\mathbb{T}^{2}$.
\end{corollary}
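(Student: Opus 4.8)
The plan is to read off the corollary from Theorem~\ref{thm1} by setting $g=1$ and $g=2$, and then to fill in by hand the finitely many degrees that the hypothesis $|d|\ge 2g-1$ of the theorem does not reach. In every case negative degrees are reduced to positive ones by precomposing with the orientation-reversing identity (Remark~\ref{negative degree}), and the degree-$0$ case is handled by a non-surjective (for instance constant) simplicial map, which has degree $0$ by the discussion following Remark~\ref{orientation}. The vertex lower bound I use in each case is the larger of the triangle-counting bound $7|d|+2-2g$ established inside the proof of Theorem~\ref{thm1} and the intrinsic minimum number $n_{\min}(\Sigma_g)$ of vertices needed to triangulate $\Sigma_g$; here $n_{\min}(\Sigma_1)=7$ and $n_{\min}(\Sigma_2)=10$.

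For the genus-$1$ half the argument is immediate: since $2g-1=1$, Theorem~\ref{thm1} already yields a minimal degree-$d$ map on a $7|d|$-vertex torus for every $d$ with $|d|\ge1$, and the single remaining degree $d=0$ is realized by a non-surjective self-map of the vertex-minimal $7$-vertex torus. As $7$ is the minimum for any genus-$1$ triangulation, this map is minimal, completing the torus-to-torus statement.

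For the genus-$2$ half, Theorem~\ref{thm1} (now $2g-1=3$) supplies minimal degree-$d$ maps on $7|d|-2$ vertices for all $|d|\ge3$, so only $d\in\{0,\pm1,\pm2\}$ remain. Using $n_{\min}(\Sigma_2)=10$, the lower bound $\max\{7|d|+2-4,\,10\}$ equals $10$ when $|d|\le1$ and $12$ when $|d|=2$. I would then produce matching triangulations: for $d=0$, a vertex-minimal $10$-vertex $\Sigma_2$ with a constant map; for $d=\pm1$, a $10$-vertex $\Sigma_2$ obtained by grafting a single handle (three new vertices) onto $\mathbb{T}^2$, with the map acting as the identity on the $\mathbb{T}^2$ part and collapsing the handle onto one triangle, so that $\mathrm{alg}(\sigma)=1$ for every $\sigma\in\mathbb{T}^2$; and for $d=\pm2$, a $12$-vertex $\Sigma_2$ realized as a simplicial $2$-fold branched cover of $\mathbb{T}^2$ with two branch points (consistent with Riemann--Hurwitz, $\chi=2\cdot0-2=-2$), so that every triangle of $\mathbb{T}^2$ is covered twice with matching sign and $\mathrm{alg}(\sigma)=2$. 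In each case the vertex count is designed to meet the corresponding lower bound, making the map minimal, and the negative degrees follow from Remark~\ref{negative degree}.

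The main obstacle is the genus-$2$, degree-$\pm1$ case. There the intrinsic bound $n_{\min}(\Sigma_2)=10$ strictly exceeds the triangle-counting bound $5$, so minimality hinges on whether a degree-$1$ simplicial map onto the $7$-vertex torus can be realized on a $10$-vertex triangulation at all; if so, that map is minimal, and if not, the lower bound must be sharpened accordingly. The concrete difficulty is to attach the extra handle to $\mathbb{T}^2$ with exactly three new vertices while (i) keeping the complex simplicial, with no forbidden identifications of faces, and (ii) labelling the three new vertices by vertices of $\mathbb{T}^2$ so that the handle collapses cleanly onto a single triangle and contributes $0$ to every $\mathrm{alg}(\sigma)$. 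Exhibiting such a labelling---and checking that the resulting $10$-vertex complex is indeed $\Sigma_2$---is the crux; once it is in hand, the $d=0$ and $d=\pm2$ constructions are routine variants, and the entire genus-$1$ half is a direct specialization of Theorem~\ref{thm1}.
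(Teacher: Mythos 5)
Your genus-$1$ half and the genus-$2$ cases with $|d|\ge 3$ or $d=0$ agree with the paper and are fine. For $d=\pm1$ on $\Sigma_2$ you correctly identify the crux but leave it open; the paper resolves it not by grafting a handle onto $\mathbb{T}^2$ but by taking a known $10$-vertex (vertex-minimal) triangulation of $\Sigma_2$ and writing down an explicit vertex assignment to $\mathbb{T}^2$, then checking that exactly one target triangle has three preimage triangles whose signs sum to $+1$ while every other target triangle has a single sign-preserving preimage. So that case is a gap in your write-up, though an acknowledged and repairable one.

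The genuine error is in the $d=\pm2$ case. Your lower bound $\max\{7|d|+2-2g,\,n_{\min}(\Sigma_2)\}=12$ is not sharp, and the $12$-vertex simplicial $2$-fold branched cover you propose cannot exist as a simplicial map onto the $7$-vertex torus. A $12$-vertex triangulation of $\Sigma_2$ has exactly $28$ triangles, so every triangle of $\mathbb{T}^2$ would need exactly two nondegenerate preimage triangles, both contributing $+1$ to $\mathrm{alg}(\sigma)$. But two triangles of $\Sigma_2$ mapping to the same triangle of $\mathbb{T}^2$ with the \emph{same} orientation cannot share an edge (by the coherent-orientation convention, sharing an edge forces opposite signs, which would make them cancel), so each such pair meets in at most a vertex. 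The paper runs exactly this argument: starting from the pair over $[v_1v_2v_4]$ ($5$ vertices), then the pair over $[v_1v_3v_4]$ (forcing two distinct preimages of $v_3$, total $7$ vertices), and continuing through $[v_1v_3v_7]$, $[v_1v_5v_7]$, $[v_1v_5v_6]$, one is forced up to at least $13$ vertices. A $13$-vertex triangulation realizing degree $2$ (with two extra triangles collapsing onto an edge) is then exhibited via the connected-sum construction of Theorem~\ref{thm 10}. So your plan for $|d|=2$ would have you searching for a triangulation that does not exist, and the minimality claim must be re-based on the sharpened bound of $13$ vertices.
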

\begin{proof}
    The existence of a minimal degree $d$ simplicial map from the torus to itself, as well as a minimal degree $d$ simplicial map (for $d \geq 3$) from an orientable surface of genus $2$ to the torus, follows directly from Theorem \ref{thm1}.

For a minimal simplicial map of degree $d = 2$ from a triangulated $\Sigma_2$ to the torus $\mathbb{T}^{2}$, the number of vertices in $\Sigma_2$ must satisfy  $n \geq 12$, as explained in the proof of Theorem \ref{thm1}. To construct such a degree $2$ simplicial map, the preimage of each triangle in the torus must consist of at least two triangles from the triangulated $\Sigma_2$. If these two preimage triangles are to have the same orientation, they cannot share an edge; otherwise, their orientations would be reversed. Hence, they must either be disjoint or share only a vertex. To ensure minimal triangulation, we assume they share a vertex. 
For constructing the triangulation of $\Sigma_2$, consider the triangle $[v_1v_2v_4]$ in the torus. Its preimage under the simplicial map will consist of two triangles in $\Sigma_2$, say $[u_1u_2u_4]$ and $[u_1u'_2u'_4]$, which share only the vertex $u_1$. Mapping this single triangle requires five vertices. Next, consider the triangle $[v_1v_3v_4]$ in the torus, and note that the edges $u_1u_4$ and $u_1u'_4$ are already present in the constructing triangulation of $\Sigma_2$, both mapping to the edge $v_1v_4$. Now we need at least two vertices that map to $v_3$ and form two triangles. If only one vertex $u_3$ is added, the resulting two triangles would share the edge $u_1u_3$. Therefore, we need at least two vertices, say $u_3$ and $u'_3$, so that we can form two triangles, $[u_1u_3u_4]$, and $[u_1u'_3u'_4]$, both with the same orientation. At this stage, the constructing triangulation requires at least $7$ vertices. 
To construct preimages for additional triangles in the torus, such as $[v_1v_3v_7]$, $[v_1v_5v_7]$, and $[v_1v_5v_6]$, we again need at least $6$ more vertices. Since each edge in the triangulated surface $\Sigma_2$ must be shared by exactly two triangles, we can complete the triangulation by carefully reusing edges and introducing new vertices as needed. This observation shows that at least 13 vertices are required to form a triangulation of $\Sigma_2$ that admits a simplicial map of degree $2$ to the torus. A triangulation of $\Sigma_2$ with 13 vertices is illustrated in Figure \ref{fig 4}, and a detailed construction of the corresponding degree $2$ simplicial map to the torus $\mathbb{T}^2$ is given in Theorem \ref{thm 10}.

For a simplicial map of degree $d=1$ from a triangulated surface $\Sigma_2$ to a $7$-vertex triangulated torus $\mathbb{T}^{2}$, we consider a minimal triangulation of $\Sigma_2$, which exists with $10$ vertices. Let $K$ be such a triangulation of $\Sigma_2$, defined by the following set of simplices: $[[v_1v_2v_3],[v_1v_2v_4],[v_1v_3v_5]$, $[v_1v_4v_6],[v_1v_5v_7]$,
$[v_1v_6v_8],[v_1v_7v_8],[v_2v_3v_6],[v_2v_4v_8],[v_2v_5v_6]$,\\* $[v_2v_5v_9],[v_2v_7v_8],[v_2v_7v_{10}]$,
$[v_2v_9v_{10}],[v_3v_5v_{10}],[v_3v_6v_8],[v_3v_8v_9],[v_3v_9v_{10}],[v_4v_6v_{10}],[v_4v_7v_9]$,\\*$[v_4v_7v_{10}],
[v_4v_8v_9],[v_5v_6v_{10}],[v_5v_7v_9]$]. We define a simplicial map from $K$ to $\mathbb{T}^{2}$ by mapping the vertices as follows: $v_1 \rightarrow v_1$, $v_2\rightarrow v_2$, $v_3\rightarrow v_4$, $\{v_4, v_{10}\}\rightarrow v_6$, $v_5\rightarrow v_3$, $v_6\rightarrow v_5$, and $\{v_7,v_8, v_9\}\rightarrow v_7$. Now, consider $[v_1v_2v_3]$ as a positive simplex in $K$ and $[v_1v_2v_4]$ as a positive simplex in $\mathbb{T}^{2}$. 
 Under this simplicial map, the simplices $[v_2v_4v_8]$, $[v_2v_{10}v_{7}]$, and $[v_2v_{10}v_9]$ all map to $[v_2v_6v_7]$, in which two of them have the same sign as $[v_2v_6v_7]$. Therefore, the algebraic number of $[v_2v_6v_7]$ is 1. The preimage of every other triangle in $\mathbb{T}^{2}$ is a single triangle from $K$, and each of them has the same sign as its image triangle. Hence, for every triangle $\sigma$ in $\mathbb{T}^{2}$, $\text{alg}(\sigma)= 1$. Thus, the degree of this simplicial map from $K$ to $\mathbb{T}^{2}$ is 1.  
\end{proof}
\begin{theorem}\label{thm 10}
      For integers $|d|=g+i$, where $ 0 \leq i\leq g-2$, and $g\geq 2$, there exists a degree $d$ simplicial map from a $(6|d|+1)$-vertex triangulated genus $g$ orientable surface to a $7$-vertex triangulated genus $1$ orientable surface.
\end{theorem}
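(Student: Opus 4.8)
```latex
The plan is to construct, for each positive degree $d = g+i$ with $0 \leq i \leq g-2$, a triangulation of $\Sigma_g$ on exactly $6d+1$ vertices together with an explicit vertex map to $\mathbb{T}^2$ that realizes degree $d$. The strategy parallels Theorem~\ref{thm1}, but here the degree is \emph{smaller} than the genus (we have $d \leq 2g-2$), so the simple ``$d$ disjoint copies of each torus triangle'' idea cannot work directly; instead the economy must come from letting the preimage of \emph{some} torus triangles consist of triangles that share vertices or reuse vertices heavily. First I would set up the polygonal model: take the standard $4g$-gon with the identification word $\alpha_1\beta_1\alpha_1^{-1}\beta_1^{-1}\cdots\alpha_g\beta_g\alpha_g^{-1}\beta_g^{-1}$, all corners identified to a single vertex $u_{(1,1)}$, and subdivide it into quadrilaterals by internal vertical edges. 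I would then triangulate each quadrilateral by the same local pattern used for the $7$-vertex torus in Figure~\ref{fig 1}, assigning the six ``new'' vertices $u_{(2,j)},\dots,u_{(7,j)}$ per quadrilateral and the shared corner $u_{(1,1)}$, and define $f$ by the rule $u_{(i,j)} \mapsto v_i$, exactly as before. Since we want $6d+1$ vertices, the count $6d$ new vertices plus one shared corner forces each of the $d$ (suitably identified) blocks to contribute exactly six fresh vertices with no extra subdivision vertices of the corner type.

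Next I would verify the two things that make this a valid degree-$d$ map. The homological computation is the easy part: as in the proof of Theorem~\ref{thm1}, I would fix $\sigma = [v_1 v_2 v_4]$ as a positive reference simplex in $\mathbb{T}^2$, check that under $f$ each triangle of $\Sigma_g$ maps to a genuine $2$-simplex of $\mathbb{T}^2$ preserving the local coherent orientation, and then show $\mathrm{alg}(\sigma) = d$ by counting signed preimages. Because the map is built from $d$ copies of the torus pattern glued along the identification scheme, each nondegenerate torus triangle should receive exactly $d$ same-signed preimages, giving algebraic number $d$ uniformly; by the discussion preceding Proposition~\ref{Composition}, this equals the degree. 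The genuinely delicate point is the \emph{topological} verification that the assembled complex is actually $\Sigma_g$ and not some other surface or a non-manifold: I must check that every edge lies in exactly two triangles, that every vertex link is a single circle, and that the resulting Euler characteristic is $2-2g$. Using $n = 6d+1$, $2e = 3f$, and $n - e + f = 2-2g$ gives $f = 2(n-2+2g) = 2(6d-1+2g) = 12d + 4g - 2$ triangles, so I would confirm the construction produces exactly this many triangles and that the quadrilateral-gluing scheme closes up correctly to genus $g$ rather than collapsing.

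The main obstacle will be reconciling the vertex budget $6d+1$ with the genus $g$ when $d < g$. In Theorem~\ref{thm1} the degree exceeded the genus, so there were ``enough'' blocks to build the surface; here, with $d$ possibly as small as $g$ (the case $i=0$) and the blocks fewer than the $2g$ sides demand, I expect the surface structure to require that several handles be carried by a single block's gluing pattern, i.e.\ the quadrilaterals must be identified in pairs across the $4g$-gon so that a block of six vertices simultaneously accounts for more than one genus worth of identification. Concretely, the hard step is exhibiting the explicit side-pairing of the $4g$-gon into $d$ quadrilateral blocks so that (i) the total vertex count is $6d+1$, (ii) all corners collapse to $u_{(1,1)}$, and (iii) the identification word still yields the orientable genus-$g$ surface. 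I would handle this by giving an explicit labeling analogous to the one in Theorem~\ref{thm1}, most likely illustrated by the $g=2$, $d=2$ example on $13$ vertices promised in Figure~\ref{fig 4} (consistent with Corollary~\ref{Coro.}'s lower bound of $13$ vertices for a degree-$2$ map from $\Sigma_2$), and then argue the general pattern by showing the link and edge-incidence conditions hold block-by-block. Finally, the negative-degree cases $d = -(g+i)$ follow immediately by composing with the orientation-reversing identity map of Remark~\ref{negative degree}.
```
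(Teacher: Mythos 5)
There is a genuine gap, and it sits exactly where you flagged the ``main obstacle.'' Your plan is to cut the $4g$-gon into $d$ quadrilateral blocks, each contributing six fresh vertices, and map each block onto the torus pattern so that every triangle of $\Sigma_g$ maps nondegenerately and every torus triangle receives $d$ same-signed preimages. But your own Euler-characteristic count already rules this out: with $n = 6d+1$ vertices you get $f = 12d + 4g - 2$ triangles, whereas ``every triangle maps to a genuine $2$-simplex and each of the $14$ torus triangles has exactly $d$ same-signed preimages'' forces $f = 14d$. Since $d = g+i \leq 2g-2 < 2g-1$, we have $12d + 4g - 2 > 14d$, so the two requirements are incompatible: in the regime of this theorem some triangles \emph{must} map degenerately to edges or vertices (or preimages must cancel in sign), and your proposal supplies no mechanism for producing or controlling such degenerate triangles. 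Relatedly, a $4g$-gon cut by disjoint internal chords into quadrilaterals necessarily yields exactly $2g-1$ of them, so there is no side-pairing into only $d < 2g-1$ blocks of the kind you describe; the suggestion that ``quadrilaterals be identified in pairs'' is not developed and is precisely the step that would have to carry the whole proof.

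The paper takes a different route that resolves both issues at once. It realizes $\Sigma_g$ as an iterated connected sum: start from the triangulated $\Sigma_{i+1}$ of Theorem \ref{thm1} carrying a degree-$(2i+1)$ map on $12i+7$ vertices, then attach $g-i-1$ copies of the $7$-vertex torus, each carrying (essentially) the identity map of degree $1$, so the total degree is $(2i+1) + (g-i-1) = g+i$. The connected-sum site on each new torus is created by subdividing one of its triangles with an interior edge (two new vertices), so each attachment adds $9 - 3 = 6$ vertices, giving $12i + 7 + 6(g-i-1) = 6d+1$ in total; and exactly two triangles per connected sum collapse to an edge under $f$, which accounts for the surplus $f - 14d = 2(g-i-1)$ degenerate triangles and contributes nothing to $\mathrm{alg}(\sigma)$. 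If you want to salvage your approach, you would need to build this kind of controlled degeneracy into the block structure; as written, the proposal cannot be completed.
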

\begin{proof}
 To construct a simplicial map of degree $d\leq 0$ from $\Sigma_g$ to $\Sigma_1$, we follow the initial steps of the proof of Theorem $\ref{thm1}$. For a simplicial map of degree $d>0$, we construct a triangulation of $\Sigma_g$ and the triangulation of $\Sigma_1$(mentioned in Remark $\ref{orientation}$). For constructing a triangulation of $\Sigma_g$, we implement an iterative connected sum process and proceed by performing a connected sum of a triangulated orientable surface of genus $(i+1)$ with a triangulated orientable surface of genus $1$.
 
 The triangulation of the genus $(i+1)$ surface is constructed in Theorem $\ref{thm1}$ for the simplicial degree $(2i+1)$ map from $\Sigma_{i+1}$ to $\Sigma_1$, with its vertex labeling consistent with Theorem $\ref{thm1}$. The genus $1$ surface is identical to that shown in Figure \ref{fig 1}, except that its vertex $v_j$ is replaced with $u_{(j,2i+2)}$ for $1\leq j\leq 7$. Now, we subdivide the triangulation of the genus $1$ surface by introducing an edge inside one of its triangles. Without loss of generality, assume that the chosen triangle is $[u_{(1,2i+2)}u_{(3,2i+2)}u_{(4,2i+2)}]$, and the vertices of the edge introduced inside this triangle are $u_{(4',2i+2)}$ and $u_{(3',2i+2)}$. After subdivision, the new triangles replacing $[u_{(1,2i+2)}u_{(3,2i+2)}u_{(4,2i+2)}]$ are $[u_{(1,2i+2)}u_{(3',2i+2)}u_{(4',2i+2)}]$, $[u_{(1,2i+2)}u_{(3,2i+2)}u_{(4',2i+2)}]$,  $[u_{(1,2i+2)}u_{(3',2i+2)}u_{(4,2i+2)}]$,  $[u_{(3,2i+2)}u_{(3',2i+2)}u_{(4',2i+2)}]$, and $[u_{(3,2i+2)}u_{(3',2i+2)}u_{(4,2i+2)}]$. The triangles involved in the connected sum between $\Sigma_{i+1}$ and $\Sigma_{1}$ are $[u_{(1,2i+2)}u_{(3',2i+2)}u_{(4',2i+2)}]$ from the triangulated $\Sigma_1$ and $[u_{(1,1)}u_{(3,2i+1)}u_{(4,1)}]$ from $\Sigma_{i+1}$. After this operation, we obtain a triangulated $\Sigma_{i+2}$. Again, we take the connected sum of the obtained triangulated surface with another triangulated orientable genus $1$ surface. The triangulation of an orientable genus $1$ surface will be the same as we defined earlier, but the vertices are now labeled as  $u_{(j,2i+3)}$ for $1\le j \le 7$. We again subdivide the triangulated genus 1 surface. Without loss of generality, assume the triangle is $[u_{(1,2i+3)}u_{(3,2i+3)}u_{(7,2i+3)}]$, and the edge introduced inside the triangle is  $[u_{(3',2i+3)}u_{(7',2i+3)}]$. After subdivision, the new triangles replacing $[u_{(1,2i+3)}u_{(3,2i+3)}u_{(7,2i+3)}]$ are $[u_{(1,2i+3)}u_{(3',2i+3)}u_{(7,2i+3)}]$, $[u_{(1,2i+3)}u_{(3',2i+3)}u_{(7',2i+3)}]$, $[u_{(1,2i+3)}u_{(3,2i+3)}u_{(7',2i+3)}]$, $[u_{(3,2i+3)}u_{(3',2i+3)}u_{(7',2i+3)}]$, and  $[u_{(3,2i+3)}u_{(3',2i+3)}u_{(7,2i+3)}]$. This subdivision gives the triangulation of an orientable surface of genus 1. The triangles that participate in the connected sum between $\Sigma_{i+2}$ and $\Sigma_{1}$ are $[u_{(1,2i+3)}u_{(3',2i+3)}u_{(7',2i+3)}]$ from the genus 1 surface and $[u_{(1,1)}u_{(3,2i+2)}u_{(7,2i+2)}]$ from the triangulated $\Sigma_{i+2}$. After this operation, we obtain $\Sigma_{i+3}$. 
This iterative process continues, wherein at each step, the selected triangle for the triangulated genus $1$ surface alternates. For instance, in the next step, it becomes $[u_{(1,2i+4)}u_{(3',2i+4)}u_{(4',2i+4)}]$. Repeating this process $g-i-1$ times, we obtain the desired triangulation of $\Sigma_g$.

Finally, we define a simplicial map $f: \Sigma_g \rightarrow \Sigma_1$ as follows. For $2 \le p\le 7$, $1\le q\le g+i$, map $f(u_{(p,q)}) = v_p$, and $u_{(1,1)}$ $\to$ $v_1$.

\begin{figure}[ht]
\tikzstyle{ver}=[]
\tikzstyle{vertex}=[circle, draw, fill=black!60, inner sep=0pt, minimum width=4pt]
\tikzstyle{edge} = [draw,black,thick,-]
\centering

\begin{tikzpicture}[scale=0.9]
\begin{scope}[shift={(0,0)}]
\foreach \x/\y/\z in {0/0/l_1, 1/0/l_2, 2/0/l_3, 3/0/s_2, 3/1/r_5, 3/2/r_4, 3/3/s_1, 2/3/u_3, 1/3/u_2, 0/3/u_1, 0/2/l_4, 0/1/l_5, 2/2/u_6, 2/1/u_7}{
\node[vertex] (\z) at (\x,\y){};}

\foreach \x/\y/\z in {-0.7/-0.1/u_{(1,1)},0.9/-0.35/u_{(2,1)},2.1
/-0.35/u_{(3,1)},3.7/-0.2/u_{(1,1)}, -0.7/3.1/u_{(1,1)},0.9/3.3/u_{(2,1)},2.1/3.3/u_{(3,1)},3.7/3/u_{(1,1)}, -0.7/1/u_{(5,1)},-0.7/2/u_{(4,1)},1.85/1.2/\tiny{u_{(7,1)}},1.5/2/u_{(6,1)},3.7/1/u_{(5,1)},3.7/2/u_{(4,1)}}{\node[ver] () at (\x,\y){{\fontsize{8pt}{7.6pt}\selectfont $\z$}};}

 \foreach \x/\y in {l_1/l_2,l_2/l_3, l_3/s_2, s_2/r_5, r_5/r_4, r_4/s_1, s_1/u_3, u_3/u_2, u_2/u_1, u_1/l_4, l_4/l_5,l_5/l_1,l_1/u_6,u_6/l_2,l_2/u_7, l_3/u_7,u_7/u_6,u_6/u_3,u_3/r_4,r_4/u_7, u_7/r_5,u_7/s_2,r_4/u_6,u_6/l_5,l_5/u_3,l_5/u_2,u_2/l_4}{
 \path[edge] (\x) -- (\y);}
 \fill[gray, opacity=0.4] (2,3) -- (3,2) -- (3,3) -- cycle;
 \node at (0.4,2.7) {\textcolor{black!90}{ \small \textbf{$+$}}};
  \node at (0.4,2.2) {\textcolor{black!90}{ \small \textbf{$-$}}};
  \node at (0.85,2.2) {\textcolor{black!90}{ \small \textbf{$+$}}};

  \node at (0.79,1.6) {\textcolor{black!90}{ \small \textbf{$-$}}};
  \node at (0.7,1) {\textcolor{black!90}{ \small \textbf{$+$}}};
  \node at (0.79,0.5) {\textcolor{black!90}{ \small \textbf{$-$}}};
  \node at (1.5,0.7) {\textcolor{black!90}{ \small \textbf{$+$}}};
  \node at (1.6,0.4) {\textcolor{black!90}{ \small \textbf{$-$}}};
  \node at (2.4,0.3) {\textcolor{black!90}{ \small {$+$}}};
    \node at (2.6,0.7) {\textcolor{black!90}{ \small {$-$}}};
  \node at (2.6,1.3) {\textcolor{black!90}{ \small {$+$}}};
   \node at (2.4,1.7) {\textcolor{black!90}{ \small {$-$}}};
   \node at (2.5,2.2) {\textcolor{black!90}{ \small {$+$}}};
   \node at (2.59,2.7) {\textcolor{black!90}{ \small {$-$}}};
\end{scope}
\begin{scope}[shift={(4.69,0)}]
  \draw[line width=0.9mm,black!60] (0,1)--(0,2); 
  \draw[line width=0.9mm,black!60] (0.5,1)--(0.5,2);
  \draw[line width=0.9mm,black!60] (-0.3,1.3)--(0.8,1.3); 
  \draw[line width=0.9mm,black!60] (-0.3,1.7)--(0.8,1.7);
 \end{scope}
\begin{scope}[shift={(7,0)}]
\foreach \x/\y/\z in {0/0/t_1, 1/0/l_2, 2/0/l_3, 4.1/0/s_2, 4.1/1/r_5, 4.1/1.9/r_4, 4.5/3.3/s_1, 2/3.3/u_3, 1/3.3/u_2, 0/3.3/u_1, 0/2/p_4, 0/1/l_5, 2/2/u_6, 2/1/u_7, 3.6/3/u_8, 3.7/2.5/q}{
\node[vertex] (\z) at (\x,\y){};}
\foreach \x/\y/\z in {-0.7/-0.1/u_{(1,2)},0.9/-0.35/u_{(2,2)},2.25/-0.35/u_{(3,2)},4.1/-0.35/u_{(1,2)}, -0.7/3.3/u_{(1,2)},0.9/3.6/u_{(2,2)},2.26/3.6/u_{(3,2)},4.7/3.6/u_{(1,2)}, -0.7/1/u_{(5,2)},-0.7/2/u_{(4,2)},1.85/1.2/\tiny{u_{(7,2)}},1.5/2/u_{(6,2)},4.78/1/u_{(5,2)},4.78/1.78/u_{(4,2)}, 4.4/2.4/u_{(3',2)}}{\node[ver] () at (\x,\y){{\fontsize{8pt}{7.6pt}\selectfont $\z$}};}
\foreach \x/\y/\z in {3/3/u_{(4',2)}}{\node[ver] () at (\x,\y){{\fontsize{4pt}{6pt}\selectfont $\z$}};}
\foreach \x/\y in {t_1/l_2, l_2/l_3, l_3/s_2, s_2/r_5, r_5/r_4, r_4/s_1, s_1/u_3, u_3/u_2, u_2/u_1, u_1/p_4, p_4/l_5, l_5/t_1, t_1/u_6, u_6/l_2, l_2/u_7, l_3/u_7, u_7/u_6, u_6/u_3, u_3/r_4, r_4/u_7, u_7/r_5, u_7/s_2, r_4/u_6, u_6/l_5, l_5/u_3, l_5/u_2, u_2/p_4, s_1/u_8, u_8/q, s_1/q,u_3/u_8,u_3/q,q/r_4}{
 \path[edge] (\x) -- (\y);}
 \fill[gray, opacity=0.4] (3.6,3) -- (3.7,2.5) -- (4.5,3.3) -- cycle;
  \node at (0.3,2.7) {\textcolor{black!90}{ \small \textbf{$+$}}};
  \node at (0.35,2.2) {\textcolor{black!90}{ \small \textbf{$-$}}};
  \node at (0.85,2.3) {\textcolor{black!90}{ \small \textbf{$+$}}};

  \node at (0.79,1.6) {\textcolor{black!90}{ \small \textbf{$-$}}};
  \node at (0.7,1) {\textcolor{black!90}{ \small \textbf{$+$}}};
  \node at (0.79,0.5) {\textcolor{black!90}{ \small \textbf{$-$}}};
  \node at (1.5,0.7) {\textcolor{black!90}{ \small \textbf{$+$}}};
  \node at (1.68,0.4) {\textcolor{black!90}{ \small \textbf{$-$}}};
  \node at (2.4,0.3) {\textcolor{black!90}{ \small {$+$}}};
    \node at (3,0.7) {\textcolor{black!90}{ \small {$-$}}};
  \node at (3.1,1.2) {\textcolor{black!90}{ \small {$+$}}};
   \node at (2.8,1.7) {\textcolor{black!90}{ \small {$-$}}};
   \node at (2.7,2.3) {\textcolor{black!90}{ \small {$+$}}};
   \node at (3.66,2.3) {\textcolor{black!90}{ \small {$+$}}};
    \node at (4.1,2.6) {\textcolor{black!90}{ \small {$-$}}};
    \node at (3.8,2.8) {\textcolor{black!90}{ \small {$+$}}};
     \node at (3.7,3.2) {\textcolor{black!90}{ \small {$-$}}};
      \node at (3.4,2.8) {\textcolor{black!90}{ \small {$-$}}};
   
\end{scope}

 \end{tikzpicture}
 \end{figure}

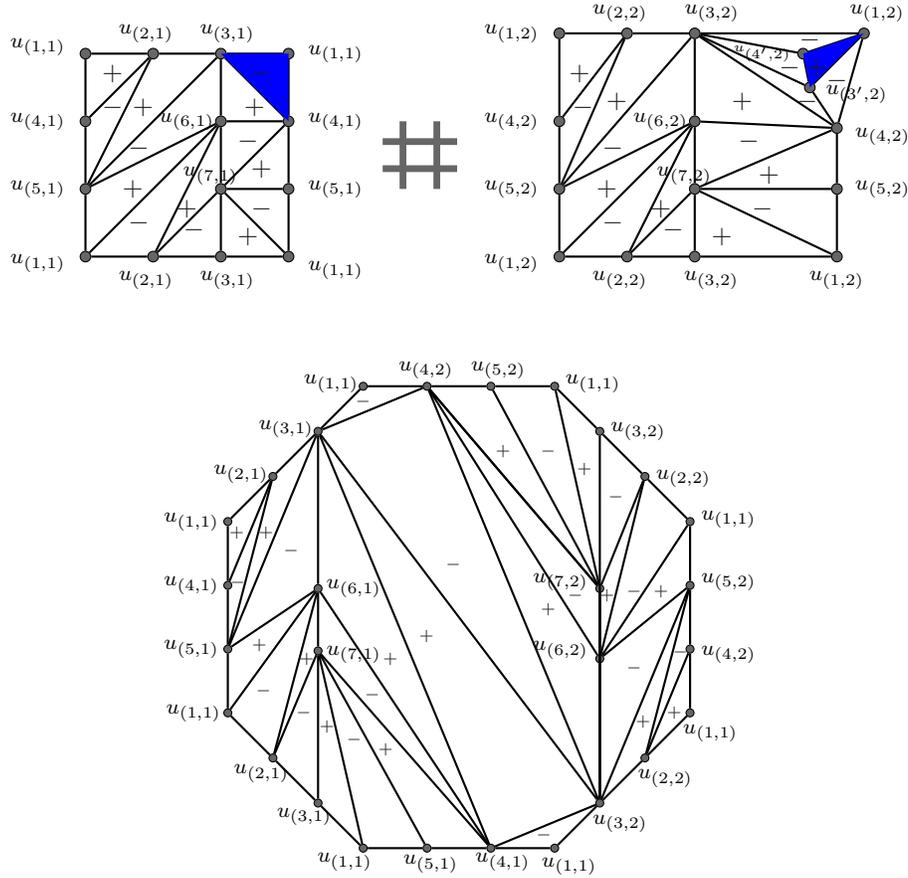
\begin{figure}[H]
\tikzstyle{ver}=[]
\tikzstyle{vertex}=[circle, draw, fill=black!60, inner sep=0pt, minimum width=3 pt]
\tikzstyle{edge} = [draw,black,thick,-]
\centering

\begin{tikzpicture}[scale=0.6]
\begin{scope}[shift={(0,0)}]
\foreach \x/\y/\z in {0/0/l_1,1/-1/l_2,2/-2/l_3,3/-3/l_4,4.414/-3/l_5,5.828/-3/l_6, 7.242/-3/l_7,8.242/-2/l_8,9.242/-1/l_9,10.242/0/l_10,10.242/1.414/l_11,10.242/2.828/l_12,10.242/4.242/l_13,9.242/5.242/l_14,8.242/6.242/l_15,7.242/7.242/l_16,5.828/7.242/l_17,4.414/7.242/l_18,3/7.242/l_19,2/6.242/l_20,1/5.242/l_21,0/4.242/l_22,0/2.828/l_23,0/1.4141/l_24,2/2.7572/l_25,2/1.3736/l_26,8.242/2.7572/l_27,8.242/1.2/l_28}{
\node[vertex] (\z) at (\x,\y){};}

\foreach \x/\y/\z in {-0.75/0/u_{(1,1)},0.75/-1.34/u_{(2,1)},1.7/-2.3/u_{(3,1)},2.6/-3.3/u_{(1,1)},4.5/-3.35/u_{(5,1)},6.1/-3.3/u_{(4,1)}, 7.57/-3.4/u_{(1,1)},8.7/-2.45/u_{(3,2)},9.7/-1.45/u_{(2,2)},10.7/-0.45/u_{(1,1)},11.1/1.3/u_{(4,2)},11.1/2.9/u_{(5,2)},11.1/4.3/u_{(1,1)},10.1/5.2/u_{(2,2)},9.1/6.242/u_{(3,2)},8.1/7.3/u_{(1,1)},6/7.6/u_{(5,2)},4.4/7.6/u_{(4,2)},2.3/7.3/u_{(1,1)},1.3/6.3/u_{(3,1)},0.3/5.3/u_{(2,1)},-0.8/4.242/u_{(1,1)},-0.8/2.828/u_{(4,1)},-0.8/1.4141/u_{(5,1)},2.8/2.7572/u_{(6,1)},2.8/1.3/u_{(7,1)},7.38/1.4/u_{(6,2)},7.38/2.9/u_{(7,2)}}{\node[ver] () at (\x,\y){{\fontsize{8pt}{7.6pt}\selectfont $\z$}};}

 \foreach \x/\y in {l_1/l_2,l_2/l_3,l_3/l_4,l_4/l_5,l_5/l_6,l_6/l_7,l_7/l_8,l_8/l_9,l_9/l_10,l_10/l_11,l_11/l_12,l_12/l_13,l_13/l_14,l_14/l_15,l_15/l_16,l_16/l_17,l_17/l_18,l_18/l_19,l_19/l_20,l_20/l_21,l_21/l_22,l_22/l_23,l_23/l_24,l_24/l_1,l_20/l_25,l_25/l_26,l_26/l_3,l_18/l_27,l_27/l_28,l_1/l_25,l_25/l_24,l_24/l_20,l_24/l_21,l_21/l_23,l_26/l_4,l_26/l_5,l_26/l_6,l_25/l_6,l_20/l_6,l_20/l_18,l_8/l_20,l_18/l_8,l_2/l_25,l_2/l_26,l_6/l_8,l_8/l_27,l_8/l_28,l_12/l_28,l_14/l_27, l_15/l_8, l_8/l_12,l_9/l_11, l_9/l_12,l_28/l_18,l_27/l_18,l_27/l_17,l_27/l_16,l_28/l_13,l_28/l_14}{
 \path[edge] (\x) -- (\y);}
 \node at (0.2,4) {\textcolor{black!90}{ \tiny\textbf{$+$}}};
  \node at (0.22,2.9) {\textcolor{black!90}{ \tiny \textbf{$-$}}};
  \node at (0.85,4) {\textcolor{black!90}{ \tiny \textbf{$+$}}};

  \node at (1.4,3.6) {\textcolor{black!90}{ \tiny \textbf{$-$}}};
  \node at (0.7,1.5) {\textcolor{black!90}{ \tiny\textbf{$+$}}};
  \node at (0.79,0.5) {\textcolor{black!90}{ \tiny \textbf{$-$}}};
  \node at (1.75,1.2) {\textcolor{black!90}{ \tiny\textbf{$+$}}};
  \node at (1.68,0) {\textcolor{black!90}{ \tiny \textbf{$-$}}};
  \node at (2.2,-0.3) {\textcolor{black!90}{ \tiny{$+$}}};
   \node at (2.8,-0.6) {\textcolor{black!90}{ \tiny{$-$}}};
    \node at (3.5,-0.8) {\textcolor{black!90}{ \tiny{$+$}}};
    \node at (3.2,0.4) {\textcolor{black!90}{ \tiny {$-$}}};
  \node at (3.6,1.2) {\textcolor{black!90}{ \tiny {$+$}}};
   \node at (4.4,1.7) {\textcolor{black!90}{ \tiny{$+$}}};
   \node at (5,3.3) {\textcolor{black!90}{ \tiny {$-$}}};
    \node at (3,6.9) {\textcolor{black!90}{ \tiny {$-$}}};
     \node at (7,-2.7) {\textcolor{black!90}{ \tiny {$-$}}};
   \node at (7.1,2.3) {\textcolor{black!90}{ \tiny{$+$}}};
    \node at (7.7,2.6) {\textcolor{black!90}{ \tiny {$-$}}};
    \node at (6.1,5.8) {\textcolor{black!90}{ \tiny {$+$}}};
    \node at (7.1,5.8) {\textcolor{black!90}{ \tiny{$-$}}};
    \node at (7.9,5.4) {\textcolor{black!90}{ \tiny{$+$}}};
    \node at (8.6,4.8) {\textcolor{black!90}{ \tiny{$-$}}};
    \node at (8.4,2.6) {\textcolor{black!90}{ \tiny{$+$}}};
    \node at (9,2.7) {\textcolor{black!90}{ \tiny{$-$}}};
     \node at (9.6,2.7) {\textcolor{black!90}{ \tiny{$+$}}};
     \node at (9,1) {\textcolor{black!90}{ \tiny{$-$}}};
     \node at (9.2,-0.2) {\textcolor{black!90}{ \tiny{$+$}}};
     \node at (10.03,1.34) {\textcolor{black!90}{ \tiny{$-$}}};
      \node at (9.89,0) {\textcolor{black!90}{ \tiny{$+$}}};
\end{scope}

\end{tikzpicture}
\caption{Triangulation of $\Sigma_2$ for a degree $2$ simplicial map}
\label{fig 4}
\end{figure}

This simplicial map induces a homomorphism $f_*: H_2(\Sigma_g, \mathbb{Z})\to H_2(\Sigma_1, \mathbb{Z})$. We claim that the deg($f$) $=$ $g+i$. To justify this, assume the simplex $[v_1v_2v_4]$ is a positive simplex in $\Sigma_1$ and $[u_{(1,1)}u_{(2,1)}u_{(4,1)}]$ is a positive simplex in $\Sigma_g$. In each connected sum operation, we observe that two triangles map to an edge, which contributes nothing to the degree. To avoid ambiguity arising from such cases, we focus on the interiors of the triangles. From the triangulation and simplicial map construction, the preimage of the interior of each 2-simplex contains the interiors of $g+i$ disjoint simplices, all preserving the same sign as their image triangle has. Hence, the degree of $f$ is $g+i$.

We refer to Figure $\ref{fig 4}$, which shows a triangulation of $\Sigma_2$, and which we use to define a simplicial map $f: \Sigma_2 \to \mathbb{T}^2$ of degree 2. The map sends $u_{(1,1)}$ to $v_1$ and $u_{(i,j)}$ to $v_i$ for $2 \leq i \leq 7$ and $j \in {1,2}$. By Corollary $\ref{Coro.}$, this is a minimal degree 2 simplicial map.

We fix orientations by taking $[v_1v_2v_4]$ and $[u_{(1,1)}u_{(2,1)}u_{(4,1)}]$ as positively oriented in $\mathbb{T}^2$ and $\Sigma_2$, respectively. The orientation of each simplex in $\Sigma_2$ is indicated in Figure $\ref{fig 4}$.

Two simplices in $\Sigma_2$, $[u_{(3,1)}u_{(3,2)}u_{(4,2)}]$ and $[u_{(3,1)}u_{(3,2)}u_{(4,1)}]$ (considered in dictionary order), map to the edge $[v_3v_4]$ and do not contribute to the map’s degree. Each triangle in $\mathbb{T}^2$ has exactly two disjoint preimages in $\Sigma_2$, matching in orientation, except for triangles like $[v_1v_3v_4]$ and $[v_3v_4v_6]$, where some preimages map to edges. To avoid ambiguity, we consider only the preimages of the interiors of triangles. From this, we conclude that the algebraic number of preimages is 2 for every triangle $\sigma \in \mathbb{T}^2$.
\end{proof}

\begin{theorem}\label{thm 11}
  For integers $|d|=g-i$, where $ 1 \leq i\leq g-1$, and $g\geq 2$, there exists a degree $d$ simplicial map from a $(6g-2i+1)$-vertex triangulated genus $g$ orientable surface to a $7$-vertex triangulated genus 1 orientable surface. 
 \end{theorem}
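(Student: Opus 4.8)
The plan is to reduce to the case $d>0$ and then realize $\Sigma_g$ as an iterated connected sum of a single ``degree-carrying'' base surface with several handles that raise the genus but contribute nothing to the degree. By Remark \ref{negative degree}, precomposing any degree-$d$ map with the orientation-reversing self-map $\mathbb{I}$ turns it into a degree-$(-d)$ map, so it suffices to treat $d=g-i>0$. Writing $d=g-i$, I would build $\Sigma_g$ as the connected sum of one genus-$d$ surface carrying all of the degree together with $i$ tori that each add genus $1$ but have algebraic count $0$. Then the total genus is $d+i=g$ while the degree stays $d=g-i$; note that degree-$0$ handles (rather than cancelling pairs of $\pm 1$ handles) are forced here, since $i$ may be odd.

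For the base I would reuse the construction of Theorem \ref{thm 10} in the special case where its internal parameter is $0$, which produces a triangulated genus-$d$ surface admitting a degree-$d$ simplicial map to $\mathbb{T}^2$ on $6d+1$ vertices (for $d=1$ this is just the $7$-vertex torus of Remark \ref{orientation} with the identity map). I would then attach $i$ triangulated tori one at a time by the simplicial connected sum of Section 2: remove a triangle from the current surface and a triangle from a newly introduced triangulated torus $T'$, and glue the two boundary $3$-cycles. Each $T'$ shares the three vertices of its gluing triangle with the base and so contributes exactly $4$ new vertices. After $i$ attachments the vertex count is $6d+1+4i=6(g-i)+1+4i=6g-2i+1$, and the genus has risen from $d$ to $d+i=g$, exactly as the statement requires.

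The map $f:\Sigma_g\to\mathbb{T}^2$ is then defined to agree with the degree-$d$ map on the base and with a fixed degree-$0$ simplicial map on each attached torus, the two descriptions being arranged to coincide on every gluing triangle. To read off the degree I would compute $\mathrm{alg}(\sigma)$ for a triangle $\sigma\in\mathbb{T}^2$ whose interior is disjoint from the images of all connected-sum regions. Over such a $\sigma$ the only genuinely two-dimensional preimages come either from the base, whose algebraic count is $d$, or from the attached tori, each of which has net algebraic count $0$; hence $\mathrm{alg}(\sigma)=d+i\cdot 0=g-i$. Since $\mathrm{alg}(\sigma)$ is independent of $\sigma$, the degree equals $g-i$ everywhere. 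As in the proofs of Theorems \ref{thm1} and \ref{thm 10}, the triangles that collapse onto an edge during a connected sum contribute nothing, so I would phrase the entire count in terms of the interiors of the $2$-simplices to avoid that ambiguity.

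The hard part is the explicit degree-$0$ attachment. I need a triangulated torus $T'$ together with a simplicial map $T'\to\mathbb{T}^2$ that is genuinely two-dimensional on the gluing triangle (so that the connected sum is valid) yet has algebraic count $0$ over every triangle of $\mathbb{T}^2$. Because a closed orientable surface forces $\mathrm{alg}(\sigma)$ to be one and the same constant over all $\sigma$, degree $0$ can hold only if the positively and negatively oriented preimages cancel in pairs over each target triangle; producing such a sign-cancelling map (necessarily non-injective on vertices) while keeping the number of new vertices down to $4$ is the delicate point, and I expect to exhibit it through an explicit labelled triangulation in the spirit of Figures \ref{fig 1}--\ref{fig 4}. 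Once this local model is pinned down, the additivity of the algebraic count under connected sum makes the remaining degree computation routine.
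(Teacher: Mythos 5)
Your proposal follows essentially the same route as the paper: a genus-$(g-i)$ base from Theorem \ref{thm 10} carrying all of the degree (the $7$-vertex torus with the identity when $g-i=1$), together with $i$ attached $7$-vertex tori, each contributing $4$ new vertices and nothing to the degree, giving $6(g-i)+1+4i=6g-2i+1$ vertices. The one ingredient you leave open --- the ``delicate'' degree-zero handle model --- is precisely where the paper is explicit, and it is not actually delicate: take the attached handle to be the $7$-vertex torus of Figure \ref{fig 1}, identify the three vertices of its gluing triangle (say $[u_1u_2u_4]$, mapping to $[v_1v_2v_4]$) with those of the removed base triangle, and send its remaining four vertices all to $v_1$. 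Since the $1$-skeleton of the $7$-vertex torus is the complete graph $K_7$, this assignment is automatically simplicial, and a triangle of the handle has a nondegenerate image only if it contains the edge $[u_2u_4]$; there are exactly two such triangles, the gluing triangle and $[u_2u_4u_5]$, both mapping to $[v_1v_2v_4]$ with opposite signs. One correction to your bookkeeping: after the gluing triangle is removed, the punctured handle cannot have algebraic count $0$ over every triangle of $\mathbb{T}^2$, because its image $2$-chain has nonzero boundary (namely the image of the gluing cycle); it is forced to contribute exactly $+1$ over the image of the gluing triangle and $0$ elsewhere, which is what compensates for the preimage deleted from the base. Your device of evaluating $\mathrm{alg}(\sigma)$ only at triangles disjoint from the connected-sum regions sidesteps this issue, so once the local model above is substituted for your unconstructed one, the degree computation goes through and the argument coincides with the paper's.
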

  \begin{proof}
  For a simplicial map $f:\Sigma_g \to \Sigma_1$ of degree $d\leq 0$, the idea of the construction has already been given at the beginning of the proof of Theorem $\ref{thm1}$. Therefore, our focus is on constructing a degree $d>0$ simplicial map from a triangulated $\Sigma_g$ to $\Sigma_1$. 

\begin{figure}[H]
\tikzstyle{ver}=[]
\tikzstyle{vertex}=[circle, draw, fill=black!60, inner sep=0pt, minimum width=3.5 pt]
\tikzstyle{edge} = [draw,black,thick,-]
\centering

\begin{tikzpicture}[scale=0.5]
\begin{scope}[shift={(0,0)}]
\foreach \x/\y/\z in {0/0/l_1,1/-1/l_2,2/-2/l_3,3/-3/l_4,4.414/-3/l_5,5.828/-3/l_6, 7.242/-3/l_7,8.242/-2/l_8,9.242/-1/l_9,10.242/0/l_10,10.242/1.414/l_11,10.242/2.828/l_12,10.242/4.242/l_13,9.242/5.242/l_14,8.242/6.242/l_15,7.242/7.242/l_16,5.828/7.242/l_17,4.414/7.242/l_18,3/7.242/l_19,2/6.242/l_20,1/5.242/l_21,0/4.242/l_22,0/2.828/l_23,0/1.4141/l_24,2/2.7572/l_25,2/1.3736/l_26,8.242/2.7572/l_27,8.242/1.2/l_28}{
\node[vertex] (\z) at (\x,\y){};}

\foreach \x/\y/\z in {-0.75/0/u_{(1,1)},0.75/-1.34/u_{(2,1)},1.7/-2.3/u_{(3,1)},2.6/-3.3/u_{(1,1)},4.5/-3.35/u_{(5,1)},6.1/-3.3/u_{(4,1)}, 7.57/-3.4/u_{(1,1)},8.7/-2.45/u_{(3,2)},9.7/-1.45/u_{(2,2)},10.7/-0.45/u_{(1,1)},11.1/1.3/u_{(4,2)},11.1/2.9/u_{(5,2)},11.1/4.3/u_{(1,1)},10.1/5.2/u_{(2,2)},9.1/6.242/u_{(3,2)},8.1/7.3/u_{(1,1)},6/7.6/u_{(5,2)},4.4/7.6/u_{(4,2)},2.3/7.3/u_{(1,1)},1.3/6.3/u_{(3,1)},0.3/5.3/u_{(2,1)},-0.8/4.242/u_{(1,1)},-0.8/2.828/u_{(4,1)},-0.8/1.4141/u_{(5,1)},2.8/2.7572/u_{(6,1)},2.8/1.3/u_{(7,1)},7.38/1.4/u_{(6,2)},7.38/2.9/u_{(7,2)}}{\node[ver] () at (\x,\y){{\fontsize{8pt}{7.6pt}\selectfont $\z$}};}

 \foreach \x/\y in {l_1/l_2,l_2/l_3,l_3/l_4,l_4/l_5,l_5/l_6,l_6/l_7,l_7/l_8,l_8/l_9,l_9/l_10,l_10/l_11,l_11/l_12,l_12/l_13,l_13/l_14,l_14/l_15,l_15/l_16,l_16/l_17,l_17/l_18,l_18/l_19,l_19/l_20,l_20/l_21,l_21/l_22,l_22/l_23,l_23/l_24,l_24/l_1,l_20/l_25,l_25/l_26,l_26/l_3,l_18/l_27,l_27/l_28,l_1/l_25,l_25/l_24,l_24/l_20,l_24/l_21,l_21/l_23,l_26/l_4,l_26/l_5,l_26/l_6,l_25/l_6,l_20/l_6,l_20/l_18,l_8/l_20,l_18/l_8,l_2/l_25,l_2/l_26,l_6/l_8,l_8/l_27,l_8/l_28,l_12/l_28,l_14/l_27, l_15/l_8, l_8/l_12,l_9/l_11, l_9/l_12,l_28/l_18,l_27/l_18,l_27/l_17,l_27/l_16,l_28/l_13,l_28/l_14}{
 \path[edge] (\x) -- (\y);}
 \fill[gray, opacity=0.4] (1,5.242)--(0,4.242)--(0,2.828)-- cycle;
\end{scope}
\begin{scope}[shift={(13,1)}]
  \draw[line width=0.5mm,black!70] (0,1)--(0,2); 
  \draw[line width=0.5mm,black!70] (0.5,1)--(0.5,2);
  \draw[line width=0.5mm,black!70] (-0.3,1.3)--(0.8,1.3); 
  \draw[line width=0.5mm,black!70] (-0.3,1.7)--(0.8,1.7);
 \end{scope}
\begin{scope}[shift={(16,-1)}]
\foreach \x/\y/\z in {0/0/l_1, 2.5/0/l_2, 5/0/l_3, 7.5/0/s_2,7.5/2.5/r_5, 7.5/5/r_4, 7.5/7.5/s_1, 5/7.5/u_3, 2.5/7.5/u_2, 0/7.5/u_1, 0/5/l_4, 0/2.5/l_5, 5/5/u_6, 5/2.5/u_7}{
\node[vertex] (\z) at (\x,\y){};}

\foreach \x/\y/\z in {-0.7/-0.1/u_{(1,3)},2.9/-0.5/u_{(2,3)},5.1
 /-0.5/u_{(3,3)},8.1/-0.5/u_{(1,3)}, -0.8/7.4/u_{(1,3)},2.9/7.8/u_{(2,3)},5.3/7.8/u_{(3,3)},7.7/7.8/u_{(1,3)}, -0.7/2.8/u_{(5,3)},-0.7/5.2/u_{(4,3)},4.2/2.6/\tiny{u_{(7,3)}},4.2/5.1/u_{(6,3)},8.3/2.6/u_{(5,3)},8.2/5.1/u_{(4,3)}}{\node[ver] () at (\x,\y){{\fontsize{9.5pt}{7.6pt}\selectfont $\z$}};}

 \foreach \x/\y in {l_1/l_2,l_2/l_3, l_3/s_2, s_2/r_5, r_5/r_4, r_4/s_1, s_1/u_3, u_3/u_2, u_2/u_1, u_1/l_4, l_4/l_5,l_5/l_1,l_1/u_6,u_6/l_2,l_2/u_7, l_3/u_7,u_7/u_6,u_6/u_3,u_3/r_4,r_4/u_7, u_7/r_5,u_7/s_2,r_4/u_6,u_6/l_5,l_5/u_3,l_5/u_2,u_2/l_4}{
 \path[edge] (\x) -- (\y);}
 \fill[gray, opacity=0.4] (0,7.5) -- (0,5) -- (2.5,7.5) -- cycle;
\end{scope}
\end{tikzpicture}
\end{figure}

     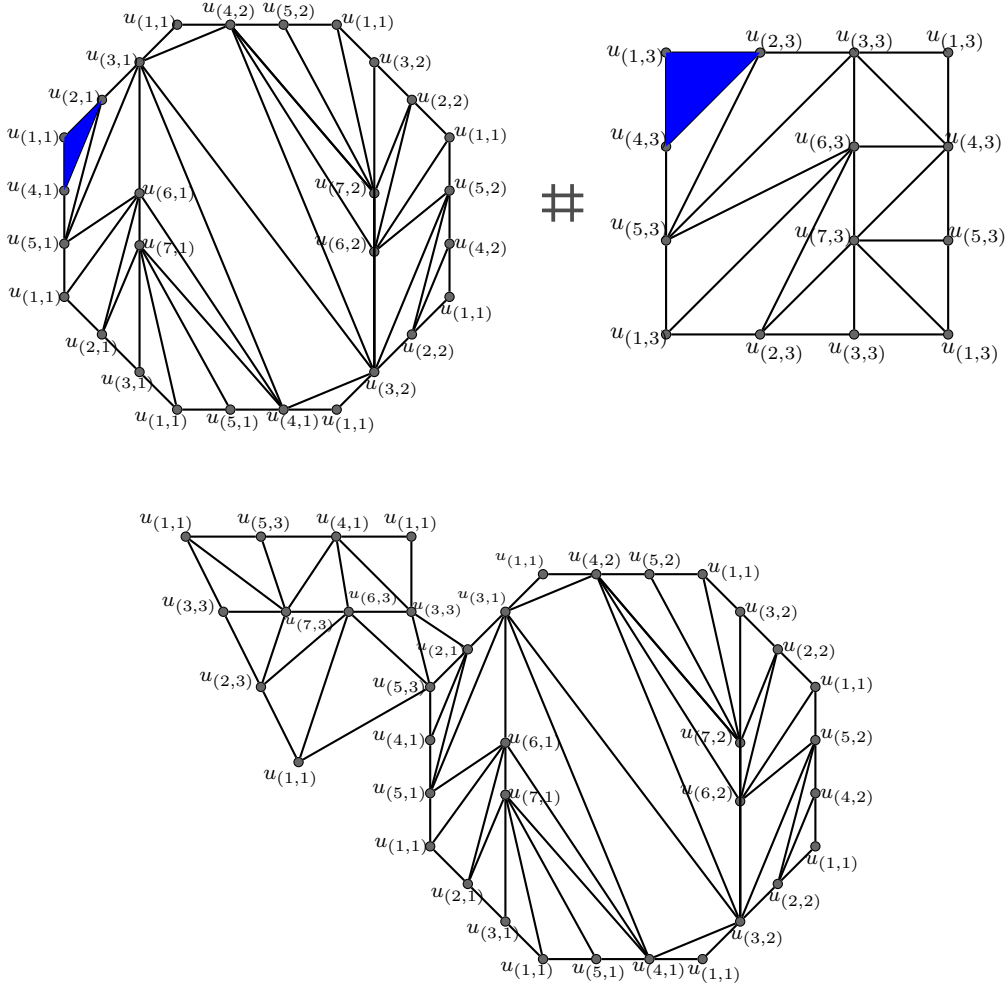
\begin{figure}[H]
\tikzstyle{ver}=[]
\tikzstyle{vertex}=[circle, draw, fill=black!60, inner sep=0pt, minimum width=3.5 pt]
\tikzstyle{edge} = [draw,black,thick,-]
\centering

\begin{tikzpicture}[scale=0.5]
\begin{scope}[shift={(0,0)}]
\foreach \x/\y/\z in {0/0/l_1,1/-1/l_2,2/-2/l_3,3/-3/l_4,4.414/-3/l_5,5.828/-3/l_6, 7.242/-3/l_7,8.242/-2/l_8,9.242/-1/l_9,10.242/0/l_10,10.242/1.414/l_11,10.242/2.828/l_12,10.242/4.242/l_13,9.242/5.242/l_14,8.242/6.242/l_15,7.242/7.242/l_16,5.828/7.242/l_17,4.414/7.242/l_18,3/7.242/l_19,2/6.242/l_20,1/5.242/l_21,0/4.242/l_22,0/2.828/l_23,0/1.4141/l_24,2/2.7572/l_25,2/1.3736/l_26,8.242/2.7572/l_27,8.242/1.2/l_28, -0.5/6.242/l_29,-0.5/8.242/l_30,-2.5/8.242/l_31,-4.5/8.242/l_32,-6.5/8.242/l_33,-5.5/6.242/l_34,-4.5/4.242/l_35,-3.5/2.242/l_36,-3.833/6.242/l_37, -2.167/6.242/l_38}{
\node[vertex] (\z) at (\x,\y){};}

\foreach \x/\y/\z in {-0.75/0/u_{(1,1)},0.75/-1.34/u_{(2,1)},1.7/-2.3/u_{(3,1)},2.6/-3.3/u_{(1,1)},4.5/-3.35/u_{(5,1)},6.1/-3.3/u_{(4,1)}, 7.57/-3.4/u_{(1,1)},8.7/-2.45/u_{(3,2)},9.7/-1.45/u_{(2,2)},10.7/-0.45/u_{(1,1)},11.1/1.3/u_{(4,2)},11.1/2.9/u_{(5,2)},11.1/4.3/u_{(1,1)},10.1/5.2/u_{(2,2)},9.1/6.242/u_{(3,2)},8.1/7.3/u_{(1,1)},6/7.6/u_{(5,2)},4.4/7.6/u_{(4,2)},-0.8/4.242/u_{(5,3)},-0.8/2.828/u_{(4,1)},-0.8/1.4141/u_{(5,1)},2.8/2.7572/u_{(6,1)},2.8/1.3/u_{(7,1)},7.38/1.4/u_{(6,2)},7.38/2.9/u_{(7,2)},-0.5/8.6/u_{(1,1)}, -2.3/8.6/u_{(4,1)},-4.4/8.6/u_{(5,3)},-7/8.6/u_{(1,1)},-6.4/6.4/u_{(3,3)},-5.4/4.4/u_{(2,1)},-3.7/1.8/u_{(1,1)}}{\node[ver] () at (\x,\y){{\fontsize{8pt}{7.6pt}\selectfont $\z$}};}

\foreach \x/\y/\z in {0.3/5.2/u_{(2,1)}, 2.4/7.6/u_{(1,1)}, 1.4/6.6/u_{(3,1)},0.2/6.3/u_{(3,3)},-1.5/6.6/u_{(6,3)}, -3.2/5.9/u_{(7,3)}}
{\node[ver] () at (\x,\y){{\fontsize{6.5pt}{5pt}\selectfont $\z$}};}

 \foreach \x/\y in {l_1/l_2,l_2/l_3,l_3/l_4,l_4/l_5,l_5/l_6,l_6/l_7,l_7/l_8,l_8/l_9,l_9/l_10,l_10/l_11,l_11/l_12,l_12/l_13,l_13/l_14,l_14/l_15,l_15/l_16,l_16/l_17,l_17/l_18,l_18/l_19,l_19/l_20,l_20/l_21,l_21/l_22,l_22/l_23,l_23/l_24,l_24/l_1,l_20/l_25,l_25/l_26,l_26/l_3,l_18/l_27,l_27/l_28,l_1/l_25,l_25/l_24,l_24/l_20,l_24/l_21,l_21/l_23,l_26/l_4,l_26/l_5,l_26/l_6,l_25/l_6,l_20/l_6,l_20/l_18,l_8/l_20,l_18/l_8,l_2/l_25,l_2/l_26,l_6/l_8,l_8/l_27,l_8/l_28,l_12/l_28,l_14/l_27, l_15/l_8, l_8/l_12,l_9/l_11, l_9/l_12,l_28/l_18,l_27/l_18,l_27/l_17,l_27/l_16,l_28/l_13,l_28/l_14, l_21/l_29, l_29/l_30, l_30/l_31, l_31/l_32, l_32/l_33, l_33/l_34, l_34/l_35, l_35/l_36, l_36/l_22, l_34/l_37, l_37/l_38,l_38/l_29,l_22/l_29,l_22/l_38,l_29/l_31,l_31/l_38,l_31/l_37,l_32/l_37,l_33/l_37,l_35/l_37,l_35/l_38,l_36/l_38}{
 \path[edge] (\x) -- (\y);}

\end{scope}
\end{tikzpicture}
\caption{Triangulation of $\Sigma_3$ for a degree $2$ simplicial map}
\label{fig 5}
\end{figure}
  
We begin by constructing the desired triangulation of $\Sigma_g$. This process starts with a triangulation of $\Sigma_{g-i}$, which we define in Theorem $\ref{thm 10}$ for a simplicial map of degree $g-i$ from $\Sigma_{g-i}$ to $\Sigma_1$. Now, we perform a connected sum between $\Sigma_{g-i}$ and a triangulated genus $1$ orientable surface containing $7$ vertices, shown in Figure $\ref{fig 1}$. Replace the vertices $v_j$ of $\Sigma_1$ with $u_{(j,g-i+1)}$, for $1\le j\le 7$, and select two triangles, one from each surface, to define the connected sum between $\Sigma_{g-i}$ and $\Sigma_1$. Without loss of generality, consider the triangles [$u_{(1,1)}u_{(2,1)}u_{(4,1)}]$ from $\Sigma_{g-i}$ and [$u_{(1,g-i+1)}u_{(2,g-i+1)}u_{(4,g-i+1)}]$ from the genus 1 surface. After this operation, we obtain a triangulated $\Sigma_{g-i+1}$. We repeat this process by performing another connected sum between a triangulated $\Sigma_{g-i+1}$ and a triangulated genus $1$ orientable surface with $7$ vertices, shown in Figure $\ref{fig 1}$. This time, we replace the vertices $v_j$ with $u_{(j,g-i+2)}$, for $1\le j\le 7$. Assume that the participating triangles are [$u_{(1,g-i+1)}u_{(5,g-i+1)}u_{(7,g-i+1)}]$ from $\Sigma_{g-i+1}$ and [$u_{(1,g-i+2)}u_{(5,g-i+2)}u_{(7,g-i+2)}]$ from the new genus 1 surface for performing a connected sum between $\Sigma_{g-i+1}$ and a triangulated genus $1$ orientable surface. After this procedure, we obtain a triangulated $\Sigma_{g-i+2}$. Continuing this procedure $i$ times, where we alternate the choice of the chosen triangle at each step, we ultimately construct a triangulation of the orientable surface of genus $g$ with $6g-2i+1$ vertices. Now we define the simplicial map $f: \Sigma_g \to \Sigma_1$ by sending $u_{(j,k)}$, where $1\le j\le 7$ and $1 \le k \le g-i$ to $v_j$, while sending all remaining vertices to $v_1$. 
  
We claim that the map $f$ has degree $d$. The map $f$ induces a homomorphism $f_*:H_2(\Sigma_g,\mathbb{Z})\rightarrow H_2 (\Sigma_1,\mathbb{Z})$. Let $[v_1v_2v_4]$ be a positive $2$-simplex in $\Sigma_1$, and let $[u_{(2,1)}u_{(3,1)}u_{(5,1)}]$ be a positive $2$-simplex in $\Sigma_g$. Using this choice, we define the orientation of every triangle in $\Sigma_g$, where the vertex order follows the dictionary order. 
From the simplicial map $f$, we observe that, except for $14(g-i)$ triangles, all others map either to a vertex or to an edge. Since such images do not contribute to the degree, we focus on the interiors of triangles in $\Sigma_1$ for the degree computation. It follows that the preimage of the interior of the triangle in $\Sigma_1$ consists of the interiors of $g-i$ disjoint triangles in $\Sigma_g$, each of which preserves the orientation(sign). Hence, the degree of $f$ is $g-i$.

We refer to Figure $\ref{fig 5}$, which shows a triangulation of $\Sigma_3$. This triangulation allows us to define a degree $2$ simplicial map $f:\Sigma_3 \to \Sigma_1$. The map $f$ is defined as in the proof of the theorem specifically, $u_{(i,j)}$ maps to $v_i$ for $1\leq i\leq 7$ and $j\in \{1,2\}$, while the remaining vertices $u_{(3,3)}$, $u_{(5,3)}$, $u_{(6,3)}$, and $u_{(7,3)}$ map to $v_1$. From the triangulation of $\Sigma_3$ and the simplicial map $f$,  we observe that the triangle $[u_{(5,3)}u_{(2,1)}u_{(4,1)}]$ maps to $[v_1v_2v_4]$, while the triangle $[u_{(1,1)}u_{(4,1)}u_{(3,3)}]$ maps to an edge $[v_1v_4]$. Any triangle that contains two vertices of the form $u_{(i,3)}$ maps either to an edge or a vertex. All other triangles map to triangles in $\Sigma_1$. Therefore, we see that the preimage of the interior of each triangle in triangulated $\Sigma_1$ contains the interiors of exactly three disjoint triangles in triangulated $\Sigma_3$, each preserving the sign of the triangles. Hence, the degree of the map $f$ is $2$. 
\end{proof}

 \begin{remark}
 
{\rm
With the help of the constructions of simplicial maps of degree $d$ from $\Sigma_g$ $\to$ $\Sigma_1$, where $g\geq 1$, we generalize the approach to define simplicial degree maps $f: \Sigma_{g_1} \rightarrow \Sigma_{g_2}$, where $g_2>1$ and $|d| \leq \lfloor \frac{g_1-1}{g_2-1}\rfloor$. The triangulations of $\Sigma_{g_1}$, and $\Sigma_{g_2}$ are as follows: If $g_1=g_2$, the possible degrees are 0 and $\pm 1$. For degree $1$, we can consider the simplicial map as the identity map between minimal triangulations of $\Sigma_g$, and for degree $-1$, we reverse the sign of simplices that lie in a domain. For degree $0$, any non-surjective simplicial map will work. When $g_1\neq g_2$, we consider two simplicial maps $g:\Sigma_{g_1-d} \rightarrow \Sigma_{g_2-1}$ and $h: \Sigma_1 \rightarrow \Sigma_1$ both of degree $d$. To obtain a triangulation of $\Sigma_{g_2}$, we perform a connected sum between a triangulated $\Sigma_{g_2-1}$(which lies in the codomain of $g$) and $\Sigma_1$(which lies in the codomain of $h$). Specifically, we remove the interior of a chosen triangle from $\Sigma_{g_2-1}$ and another from $\Sigma_1$, identifying their boundaries. Each triangle in $\Sigma_{g_2-1}$ and $\Sigma_1$ has $d$ triangles in its preimage under the maps $g$ and $h$, respectively. For constructing the triangulation of $\Sigma_{g_1}$, we perform the connected sum operation $d$ times between $\Sigma_{g_1-d}$(which lies in the domain of $g$) and $\Sigma_1$(which lies in the domain of $h$), ensuring that the operation is carried out on all preimage triangles corresponding to the ones selected for the connected sum of $\Sigma_{g_2-1}$ and $\Sigma_1$. Now, we have triangulated $\Sigma_{g_2}$ and $\Sigma_{g_1}$, and we define the simplicial map $f:\Sigma_{g_1}\rightarrow \Sigma_{g_2}$ by setting $f(v)=g(v)$ for vertices $v\in \Sigma_{g_1-d}$ and $f(v)=h(v)$ for vertices $v\in \Sigma_{1}$. By construction, the map $f$ is a simplicial map, and the degree of this simplicial map is $d$.}
 \end{remark} 
\section{Future Directions}
The construction of manifold triangulations and the search for minimal ones have been fundamental topics in combinatorial topology. In this paper, we focus on the unique minimal 7-vertex triangulation of the torus and construct simplicial degree \( d \) maps from a triangulated genus \( g \) surface to this 7-vertex torus triangulation for \( g \geq 1 \). Our construction is minimal for all \( d \) when \( g = 1,2 \), and for \( g \geq 3 \), it remains minimal for \( |d| \geq 2g - 1 \). The results obtained in this work, along with the methods and observations introduced, open several potential research directions.  

\begin{speculation} 
Given the $7$-vertex torus triangulation, one can explore the minimal simplicial maps of degree \( d \) from a genus \( g \) surface to the 7-vertex torus for \( |d| < 2g - 1 \) when \( g \geq 3 \).  
\end{speculation}  

\begin{speculation} 
The minimal triangulation of an orientable genus $2$ surface (double torus) requires at least $10$ vertices, with $865$ distinct triangulations known. One can choose any of these triangulations and investigate all possible minimal simplicial maps of degree \( d \) from a higher-genus surface of genus \( g \) to the double torus, where \( |d| \leq g - 1 \).
\end{speculation}  

The results can also be extended to higher dimensions. It is well known that the simplicial volume of \( \mathbb{S}^{n-1} \times \mathbb{S}^1 \) is zero, and a degree \( d \) self-map of \( \mathbb{S}^{n-1} \times \mathbb{S}^1 \) exists for every \( d \in \mathbb{Z} \). 

\begin{speculation} 
One can consider a minimal triangulation of \( \mathbb{S}^{n-1} \times \mathbb{S}^1 \) and investigate the existence of minimal simplicial self-maps of degree \( d \) on \( \mathbb{S}^{n-1} \times \mathbb{S}^1 \).  
\end{speculation}  

It is well known that the simplicial volume of the $n$-dimensional torus \( \mathbb{T}_n:=\mathbb{S}^{1} \times \mathbb{S}^1 \times \cdots \times  \mathbb{S}^1 \) ($n$-times) is zero, and a degree \( d \) self-map of \( \mathbb{T}_n \) exists for every \( d \in \mathbb{Z} \). 

\begin{speculation} 
One can consider a minimal triangulation of \( \mathbb{T}_n \) and investigate the existence of minimal simplicial self-maps of degree \( d \) on \( \mathbb{T}_n \).  
\end{speculation}  

These research problems highlight the many open problems and active research directions in this area, offering exciting opportunities for further exploration.

\medskip

 \noindent {\bf Acknowledgement:} The authors would like to thank the anonymous referees for their valuable comments and suggestions, which have greatly improved the manuscript. The first author is supported by the Mathematical Research Impact Centric Support (MATRICS) Research Grant (MTR/2022/000036) by ANRF (India). The second author is supported by the institute fellowship at IIT Delhi, India.
 
\smallskip

\noindent {\bf Data availability:} The authors declare that all data supporting the findings of this study are available within the article.

\smallskip

\noindent {\bf Declarations}

\noindent {\bf Conflict of interest:} No potential conflict of interest was reported by the authors.

\end{document}